\def\Xint#1{\mathchoice
{\XXint\displaystyle\textstyle{#1}}%
{\XXint\textstyle\scriptstyle{#1}}%
{\XXint\scriptstyle\scriptscriptstyle{#1}}%
{\XXint\scriptscriptstyle\scriptscriptstyle{#1}}%
\!\int}
\def\XXint#1#2#3{{\setbox0=\hbox{$#1{#2#3}{\int}$ }
\vcenter{\hbox{$#2#3$ }}\kern-.58\wd0}}
\def\dashint{\Xint-}
\newtheorem{theorem}{Theorem}[section]
\newtheorem{lemma}[theorem]{Lemma}
\numberwithin{equation}{section}
\newtheorem{remark}{Remark}
\numberwithin{remark}{section}
\begin{document}
\title[Maximum-principle-satisfying discontinuous Galerkin schemes]{Third order maximum-principle-satisfying DG schemes for convection-diffusion problems with anisotropic diffusivity}
\author{
Hui Yu and Hailiang Liu
}
\address{Tsinghua University, Yau Mathematical Sciences Center, Beijing, China 100084} \email{huiyu@tsinghua.edu.cn}
\address{Iowa State University, Mathematics Department, Ames, IA 50011} \email{hliu@iastate.edu}
\subjclass{65M06, 65M60, 65M12}
\keywords{Diffusion,  discontinuous Galerkin method,  maximum principle, high order accuracy.} 
\date{\today}

\begin{abstract} 
For a class of convection-diffusion equations with variable diffusivity, we construct third order accurate discontinuous Galerkin (DG) schemes on both one and two dimensional rectangular meshes.
The DG method with an explicit time stepping can well be applied to nonlinear convection--diffusion equations.  
It is shown that under suitable time step restrictions, the scaling limiter proposed in [Liu and Yu, SIAM J. Sci. Comput. 36(5): A2296--A2325, 2014] when coupled with the present DG schemes preserves the solution bounds indicated by the initial data, i.e., the maximum principle, while maintaining uniform third order accuracy.
These schemes can be extended to rectangular meshes in three dimension.
The crucial for all model scenarios is that  an effective test set can be identified to verify the desired bounds of numerical solutions. 
This is achieved mainly by taking advantage of the flexible form of the diffusive flux  and the adaptable decomposition of weighted cell averages. 
Numerical results are presented to validate the numerical methods and demonstrate their effectiveness.
\end{abstract}

\maketitle

\section{Introduction} 
In this paper, we construct and analyze third order maximum-principle-satisfying (MPS) discontinuous Galerkin (DG) schemes for the following problem,
 \begin{equation}\label{md}
\left\{
  \begin{array}{ll}
  M(x) \partial_t u + \nabla\cdot f(u) = \nabla \cdot(A\nabla u), & x\in \mathbb{R}^d, \; t>0,\\
    u(0,x)=u_0 (x), & x\in \mathbb{R}^d.
  \end{array}
\right.
\end{equation}
This equation can be considered as a model for numerous physical problems. 
In this equation,  $u=u(t, x)$ is a time-dependent unknown scalar function, $f(u)$ is the smooth vector flux,  and $d$ is the spatial dimension. We assume that the diffusion tensor $A=A(x, u)$ is a symmetric and nonnegative definite matrix, and $M(x)$ is strictly positive scalar function. In this paper, we present our results and analysis for $d=1, 2$, extension to three dimension can be done as well. 

Our concern for (\ref{md}) arises from several typical scenarios. 
The first case with $f=0$ is a model for heat conduction in a non-uniform body with $M(x)=\frac{c(x)}{\rho(x)}$, where $c(x)$ is the specific heat and $\rho(x)$ the mass density. 
The heat flux is proportional to the temperature difference $-A \nabla u$, known as the Fourier law of heat conduction. 
Here $A$ measures the ability of the material to conduct heat, called the thermal conductivity. 
In the second case with  $M=1$, we have the usual convection-diffusion equation, 
\begin{equation}\label{conv_dif}
\partial_t u + \nabla \cdot f(u)  = \nabla \cdot(A(x, u) \nabla u). 
\end{equation}
There are many interpretations and derivations from fluid dynamics and other application areas that motivate the convection-diffusion equation, 
such as Navier-Stokes equations and the porous medium equation.  The third case is the Fokker-Planck equation, 
\begin{align}\label{FP}
\partial_t \rho =\nabla \cdot(\nabla \rho +\nabla V(x) \rho),
\end{align}
which can be rewritten in terms of $u=\rho e^{V(x)}$ as (\ref{md}) with $A(x) = M(x)=e^{-V(x)}$, and $f=0$. This setting allows for many variants.  
	
The above three types of problems can be formulated under the model class (\ref{md}). 
One important solution to (\ref{md}) is the one that is bounded by two constants dictated by the initial data, leading to the so-called Maximum Principle (MP). 
In other words, if
\[
c_1 = \min_{x} u_0,	\quad c_2 = \max_{x} u_0,	
\]
then $u(t,x) \in [c_1, c_2]$  for any $x\in \mathbb{R}^d$ and $t>0$.  

From the analytical viewpoint, the maximum principle is quite general but very significant due to its physical implications.  From the numerical viewpoint, it is widely recognized that maximum principle provides a valuable tool in proving solvability results (existence and uniqueness of discrete solutions), enforcing numerical stability, and deriving convergence results (a priori error estimates) for the sequence of approximate solutions.
Design of a high order scheme to preserve the maximum principle  is known a challenging task. Our goal is to better understand how a high order DG scheme can be constructed for (\ref{md})  to respect the MPS property.
The main difficulty in the anisotropic case with a variable weight $M(x)$ is the derivation of suitable sufficient conditions so that the weighted cell 
average stays in $[c_1,c_2]$ during the time evolution. Such weighted cell average is essentially used for limiting the numerical solution into $[c_1, c_2]$, 
without destroying accuracy.   

\subsection{Related work} 
Early discussion of the discrete maximum principle for the convection-diffusion equations includes the linear finite element solutions for parabolic equations \cite{Fu73},  and recent developments \cite{FHK05, FH06, TW08, VKH10, FK12}, as well as \cite{MH85} by the Petrov--Galerkin finite element method to solve convection dominated problems. However, they are under a different framework. 

The present investigation involves the choice of  numerical fluxes and monotonicity of weighted cell averages in terms of point values. In the largest sense, the origins of  these ideas  go all the way back to monotone schemes for hyperbolic conservation laws.  

Indeed, for scalar conservation laws, i.e., (\ref{conv_dif}) with $A=0$,  many first order classical schemes can be shown to be MPS (other names of this sort include bound-preserving,  positivity preserving, or maximum-principle-preserving) since such low order accurate  schemes are usually monotone. On the other hand, the Godunov Theorem states that a linear monotone scheme is at most first order accurate for the convection equation \cite{Le92}. 
To construct high order accurate MPS schemes for scalar convection, weak monotonicity in finite volume type schemes including DG methods was first used in \cite{ZS10, ZS11, ZLS12}. 
Here by weak monotonicity it means that each cell average  is monotone with respect to point values in that cell, see e.g.  \cite{Zh17}.  
The main idea in their work  is to find sufficient conditions to preserve the desired bounds of cell averages by repeated convex combinations.  A simple and efficient local MPS limiter can then be used to control the solution values at test points without affecting accuracy and conservation. Together with strong stability preserving (SSP) Runge--Kutta or multistep methods \cite{GKS11},  which are convex combinations of several formal forward Euler steps, a high order accurate finite volume or DG scheme can be rendered MPS with the limiter. 

For diffusion,  a  linear finite volume scheme can only be up to second order accurate in order to preserve the weak monotonicity, unless a non-conventional discretization is used in the scheme construction such as that in \cite{ZLS12}. 
For DG methods, in general only second order accuracy can be obtained to feature the MPS property; see \cite{ZZS13} for solving (\ref{conv_dif}) on triangular meshes. 

The only DG method known to  satisfy the weak monotonicity up to third order accuracy is the direct DG  (DDG) method introduced 
in \cite{LY09, LY10}.   
Indeed, the special method  parameters in the DDG discretization allowed us to design in \cite{LY14}  a third order MPS method for the linear Fokker-Planck equation (\ref{FP}). 
A key idea in  \cite{LY14} is the use of the non-logrithimic Landau formulation 
\[
M\partial_t u= \nabla \cdot \left(M\nabla u\right) \quad\text{ with } M(x) = e^{-V(x)} \text{ and } u=\frac{\rho}{M},
\]
so that the corresponding maximum principle on $\rho(t,x)$: 
\begin{equation*} 
c_1 e^{-V}\leq \rho(0, x) \leq c_2 e^{-V}  \Longrightarrow c_1 e^{-V}\leq \rho(t, x) \leq c_2 e^{-V} \quad \forall t > 0,
\end{equation*}
reduces to  
\[
c_1 \leq u(0, x) \leq c_2   \Longrightarrow c_1 \leq u(t, x) \leq c_2 \quad \forall t >0.
\]
With this reformulation, one can show that each weighted cell average is monotone in terms of point values under appropriate CFL conditions. The result in \cite{LY14} is directly applicable to multi-dimensional diffusion on rectangular meshes.  However, it gets subtle to ensure the MPS property on unstructured meshes; we refer to \cite{CHY16} for a third order such DDG method to solve diffusion equations on unstructured triangular meshes.

Another approach towards a positivity-preserving scheme with high order accuracy is to use the local DG (LDG) method \cite{BR97, CS98}, combined with some special positivity-preserving fluxes.
 Such an effort was first made in \cite{Zh17} for constructing high order accurate positivity-preserving DG schemes for compressible Navier--Stokes equations. Concerning further developments in this direction, we refer to \cite{SCS18, SPZ18} for solving convection--diffusion problems.   
 An MPS third-order LDG method  using overlapping meshes has been recently proposed in \cite{DY19} for convection-diffusion equations.

One noteworthy alternative to enforce positivity in high order schemes is to take a convex combination of high order flux with a first order positivity-preserving one; the method has been applied to various high order schemes including finite difference,  finite volume,  and DG methods \cite{JX13, XQX15, YXQX16}, while rigorous justification of accuracy for such methods seems difficult. 

\subsection{Present investigation} The main distinction of our present investigation from  the  above mentioned works is the use of weighted cell averages for both ensuring the weak monotonicity and applying the scaling limiter, with special attention on difficulty caused by the  anisotropic diffusivity.  
 
 The spatial discretization explored in this work is the DDG method introduced by Liu and Yan in \cite{LY09, LY10}.  Besides the usual advantages of a DG method (see e.g. \cite{HW07, Ri08, Sh09}), one main feature of the DDG method lies in numerical  flux choices for the solution gradient, which involve second order derivatives evaluated crossing cell interfaces (see (\ref{flux_DDG_1Dp}) below). With this choice, the obtained schemes are provably stable and optimally convergent as well as superconvergent for $\beta_1\not=0$ \cite{Liu15, CLZ17}. Such method has also been successfully extended to various application problems, including Fokker-Planck type equations \cite{LY14, LY15, LW16, LW17}, and the three dimensional compressible Navier--Stokes equation \cite{CYLLL16}. 

Built upon the work \cite{LY14}, we present third order DG methods for solving the initial value problem (\ref{md}), with an application to nonlinear convection-diffusion equations of form (\ref{conv_dif}). 
Let us illustrate the main ideas via a simple one-dimensional equation subject to periodic boundary condition:
$$
M(x)\partial_t u  = \partial_x(A(x)\partial_xu). 
$$  
The computational cell is denoted by $I_j = [x_{j-\frac12},x_{j+\frac12}]$,  where $x_{j+\frac12}$'s are the grid points.
Let $\tau$ and $h$ be the time and space steps for a uniform mesh and $n$ the index for the time stepping. 
The update on $\langle u_h^n \rangle_j$, the weighted cell average of the numerical approximation $u_h^n$, is given by 
\begin{equation*}
\langle u^{n+1}_h\rangle_j =  \langle u^n_h \rangle_j - \frac{\tau} {h} \left. A\widehat{\partial_x u_h^n}\right|_{\partial I_j}
\quad \text{ where } \langle u \rangle_j:= \frac{1}{h}\int_{I_j}M(x)u(x)\,dx.
\end{equation*}
Note that $\widehat{\partial_x u_h}$ is the approximation to $\partial_x u$ at $\partial I_j$, the cell interface of $I_j$, given by 
\begin{align*}
\widehat{\partial_x u}& =\frac{\beta_0}{h}[u] +\{\partial_x u\} +\beta_1 h[\partial_x^2 u].
\end{align*}
As mentioned above, this form of the diffusive flux was originally introduced in \cite{LY09, LY10} as part of the DDG method for solving the diffusion problem. 
If the flux parameters satisfy 
\begin{align*}
\beta_0\geq 1 \quad\text{ and } \quad \frac{1}{8}\leq \beta_1\leq \frac{1}{4},
\end{align*}
then the procedure developed in \cite{LY14} can be extended to the present setting to conclude that, under a suitable CFL condition, the simple Euler forward will keep the cell average  $ \bar u^n_j =\frac{\langle u_h^n\rangle_j}{\langle 1 \rangle_j} \in  [c_1, c_2]$ in each time step,  and the validity of the maximum principle when combined with a scaling limiter. 

For a two dimensional problem with $A = \left(\begin{array}{cc} a & c \\ c & b \end{array}\right)$, the DDG method on shape-regular Cartesian meshes with $\kappa^{-1} \leq
\frac{\Delta x}{\Delta y}\leq \kappa$ can be rendered MPS if
\begin{equation*}
\beta_0 \geq 1+\frac{\kappa|c| L(L-1)}{2 \min\{a, b\}} \quad\text{ and } \quad \frac{1}{8} \leq \beta_1 \leq \frac{1}{4}.
\end{equation*}
Here $L$ is the number of Gauss-Lobatto points used in the numerical evaluation of involved integrals. 
With $f(u)$ and $A = A(x,y,u)$, the MPS DDG schemes are analyzed in Section \ref{ssec2DNoninearCD} where the parameter range and the CFL conditions are established.

The main conclusion is as follows: by applying the weighted MPS limiter introduced in \cite{LY14}  to the DDG scheme designed here for \eqref{md}, with the time discretization by an SSP Runge-Kutta method (see \cite{GKS09}), we obtain a third  order accurate scheme solving (\ref{md}) satisfying the strict maximum principle in the sense that the numerical solution never goes out of the range $[c_1, c_2]$ as indicated by the initial data. 

\subsection{ Organization of the paper}
The rest of the paper is organized as follows. In Section \ref{sec1D}, we design the numerical method for one dimensional problems.  
We first formulate the  DDG scheme to solve the model heat equation, and prove the MPS property of the third order fully discretized DDG scheme, we then apply the result to show the MPS property for nonlinear convection-diffusion equations. 
Section \ref{sec2D} is organized similarly for two dimensional problems on Cartesian meshes. 
In Section \ref{secLimiter},  we present the MPS limiter, with which the algorithm is complete.  
In Section \ref{secNum}, numerical tests for the DDG method are reported, including examples from the heat equation, porous media equation 
the Buckley-Leverett equation, and two dimensional diffusion with the anisotropic diffusion.  
Concluding remarks are given in Section \ref{secConclude}.

\section{MPS schemes in one dimension}\label{sec1D}
We first investigate the MPS property for third order DDG schemes to weighted diffusion equations, and show how to apply the scheme obtained to nonlinear convection-diffusion equations. 
 
\subsection{The diffusion equation} We begin with the heat equation of the form 
\begin{align}\label{Ma}
M(x)\partial_t u=\partial_x (A(x)\partial_x u),
\end{align}
with $M(x)>0$ and $A(x)\geq 0$ on the spatial domain $\Omega$,  subject to initial data $u_0(x)$ and the periodic boundary condition.  It is known that  the following maximum principle holds:
\[
\text{if } c_1 \leq u_0(x) \leq c_2 \;\; \forall x \in \Omega, \text{ then } c_1 \leq u(t, x) \leq c_2 \;\; \forall x \in \Omega, t>0.
\]
In general, the problem can be either defined on a connected compact domain with proper boundary conditions, or it can involve the whole real line with solutions vanishing at the infinity.  
In our numerical scheme, we will always choose the spatial domain to be a connected interval. 
For simplicity, the periodic boundary conditions are applied. 

We partition the domain $\Omega$ by regular cells such that  $\Omega=\bigcup\limits_{j=1}^NI_j$ with $I_j = [x_{j-\frac12}, x_{j+\frac12}]$. 
Denote $h_j=x_{j+\frac12}-x_{j-\frac12}$ and $h=\max\limits_j h_j$.  
We seek numerical solutions in the discontinuous piecewise polynomial space, 
$$
V_h=\{v \in L^2(\Omega) |\quad v|_{I_j}\in P^k(I_j), \quad j=1\cdots, N\}.
$$
Here $P^k(I_j)$ is the space of $k$-th order polynomials on $I_j$. Note that the functions in $V_h$ can be double-valued at cell interfaces.  Hence notations $v^-$ and $v^+$ are used for the left limit and right limit of $v$.  The jump of these two values, $v^+-v^-$, is denoted by $[v]$, and the average by $\{v\}$. 

Throughout this paper we adopt the DDG numerical flux of the form 
\begin{align}\label{flux_DDG_1Dp}
\widehat{\partial_x v}& =\frac{\beta_0}{h_{j+\frac12}}[v] +\{\partial_x v\} +\beta_1 h_{j+\frac12}[\partial_x^2 v] \quad\text{ with } h_{j+\frac12} = \frac{h_j + h_{j+1}}{2}, 
\end{align}
when crossing the cell interface $x_{j+\frac12}$, and $(\beta_0, \beta_1)$ are in the range to be specified
so that the underlying third order scheme can weakly satisfy the maximum-principle.  
The parameter range was first identified in \cite{LY14} for a third order DDG scheme to feature the MPS property for linear Fokker-Planck equations.  

For model equation (\ref{Ma}), we consider a $(k+1)$th-order DG scheme: to find $u_h\in V_h$ such that for any test function $v\in V_h$,
\begin{align*}
\int_{I_j} M(x) \partial_t u_h v\,dx & =  -\int_{I_j} A(x) \partial_x u_h \partial_x v\,dx + \left. A \left[\widehat{\partial_x u_h}v +(u_h -\{u_h\})\partial_x v\right]\right|^{x_{j+\frac12}}_{x_{j-\frac12}},
\end{align*}
with the diffusive flux $\widehat{\partial_x u_h}$ as defined in (\ref{flux_DDG_1Dp}). 
This DDG scheme with interface correction was proposed in \cite{LY10} for the diffusion problem, as an improved version of that in \cite{LY09}. Based on the DDG scheme in \cite{LY09},  we will have 
\begin{align*}
\int_{I_j} M(x) \partial_t u_h v\,dx & =  -\int_{I_j} A(x) \partial_x u_h \partial_x v\,dx + \left. A \widehat{\partial_x u_h}v\right|^{x_{j+\frac12}}_{x_{j-\frac12}}.
\end{align*}

With the forward Euler time discretization, the weighted cell average for either of two DDG schemes evolves as
\begin{equation}\label{EF1Dv1}
 \langle u^{n+1}_h \rangle_j=  \langle u^n_h \rangle_j + \mu h\left. A \widehat{\partial_x u_h^n}\right|^{x_{j+\frac12}}_{x_{j-\frac12}},
\end{equation}
where $\mu = \frac{\tau}{h^2}$ is the mesh ratio.  For a concise presentation, a uniform mesh is assumed. Here and in what follows, we denote the time step length as $\tau$. 
Note that for periodic boundary conditions considered in this paper, we take  
$$
u_{h,\frac12}^-=u^-_{h, N+\frac12}, \quad u_{h, N+\frac12}^+=u^+_{h, \frac12}
$$
in the DDG numerical flux formula (\ref{flux_DDG_1Dp}) when $j=\frac12, N+\frac12$.

We also use the notation 
$$
 \langle q(\xi) \rangle_j= \frac{1}{2}\int_{-1}^1M(x_j+\frac{h}{2}\xi)q(\xi)d\xi, \text{ for any } q(\xi) \text{ on }[-1, 1].  
$$
And the cell average $\bar u_j$ is
$\bar u_j = \frac{\langle u_h \rangle_j}{ \langle 1 \rangle_j}$.
For $j=1,\cdots, N$, we define 
\begin{align}\label{dif1Dab}
a_j= \frac{\langle \xi-\xi^2 \rangle_j}{\langle 1-\xi \rangle_j}, \quad 
b_j=\frac{\langle \xi +\xi^2 \rangle_j}{\langle 1+\xi\rangle_j}, 
\end{align}
and
\begin{align}\nonumber
\tilde{\omega}^1_j = &\frac{\langle \gamma-\xi(1+\gamma)+\xi^2\rangle_j }{2(1+\gamma)},\\ \label{c5ome}
\tilde{\omega}^2_j =&\frac{\langle 1-\xi^2\rangle_j }{1-\gamma^2}, \\ \nonumber
\tilde{\omega}^3_j =&\frac{\langle -\gamma +\xi(1-\gamma)+\xi^2\rangle_j }{2(1-\gamma)}
\end{align}
with the weight $M(x)|_{I_j}=M(x_j+\frac{h}{2}\xi)$. We recall the following key result. 
\begin{lemma}\cite[Lemma 3.3]{LY14}  $\tilde{\omega}^i_j >0$ for $i=1, 2,3$ if and only if 
$$
\gamma \in (a_j, b_j),
$$
where $a_j, b_j$ satisfy $-1<a_j<b_j<1$. 
\end{lemma}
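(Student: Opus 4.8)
The plan is to exploit two structural features of the setup. First, $q \mapsto \langle q \rangle_j$ is a \emph{strictly} positive linear functional on $C[-1,1]$, since $M(x_j+\tfrac{h}{2}\xi) > 0$; thus $\langle q\rangle_j > 0$ whenever $q \geq 0$ on $[-1,1]$ with $q \not\equiv 0$. Second, each numerator in \eqref{c5ome} is $\langle \cdot\rangle_j$ applied to a quadratic in $\xi$ that factors: $\gamma - \xi(1+\gamma) + \xi^2 = (1-\xi)(\gamma - \xi)$, $\ -\gamma + \xi(1-\gamma) + \xi^2 = (1+\xi)(\xi-\gamma)$, and $1 - \xi^2 = (1-\xi)(1+\xi)$. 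Since $1\pm\xi \geq 0$ and $1-\xi^2 \geq 0$ on $[-1,1]$ and none of these vanishes identically, the quantities $\langle 1-\xi\rangle_j$, $\langle 1+\xi\rangle_j$, $\langle 1-\xi^2\rangle_j$ are all strictly positive; in particular $a_j,b_j$ in \eqref{dif1Dab} are well defined, and $\tilde\omega^2_j > 0$ if and only if $1-\gamma^2 > 0$, i.e. $|\gamma| < 1$.

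Next I would rewrite $\tilde\omega^1_j$ and $\tilde\omega^3_j$ using linearity of $\langle\cdot\rangle_j$ together with \eqref{dif1Dab}: $\langle (1-\xi)(\gamma-\xi)\rangle_j = \langle 1-\xi\rangle_j\,(\gamma - a_j)$ and $\langle (1+\xi)(\xi-\gamma)\rangle_j = \langle 1+\xi\rangle_j\,(b_j-\gamma)$, whence
\begin{equation*}
\tilde\omega^1_j = \frac{\langle 1-\xi\rangle_j}{2(1+\gamma)}\,(\gamma - a_j),
\qquad
\tilde\omega^3_j = \frac{\langle 1+\xi\rangle_j}{2(1-\gamma)}\,(b_j - \gamma).
\end{equation*}
Because the coefficients $\langle 1\mp\xi\rangle_j$ are positive, the sign of $\tilde\omega^1_j$ coincides with that of $(\gamma - a_j)/(1+\gamma)$, and the sign of $\tilde\omega^3_j$ with that of $(b_j-\gamma)/(1-\gamma)$.

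The core of the argument is the claim $-1 < a_j < b_j < 1$. The endpoint bounds are one-line computations: $a_j + 1 = \langle 1-\xi^2\rangle_j/\langle 1-\xi\rangle_j > 0$, $\ 1 - a_j = \langle (1-\xi)^2\rangle_j/\langle 1-\xi\rangle_j > 0$, and symmetrically $b_j + 1 = \langle (1+\xi)^2\rangle_j/\langle 1+\xi\rangle_j > 0$, $\ 1 - b_j = \langle 1-\xi^2\rangle_j/\langle 1+\xi\rangle_j > 0$. For $a_j < b_j$ I would clear denominators; writing $m_k := \langle \xi^k\rangle_j$, a short expansion gives
\begin{equation*}
b_j - a_j = \frac{2\,(m_0 m_2 - m_1^2)}{\langle 1-\xi\rangle_j\,\langle 1+\xi\rangle_j},
\end{equation*}
and $m_0 m_2 - m_1^2 > 0$ is the strict Cauchy--Schwarz inequality for $\langle\cdot\rangle_j$ applied to the pair $1,\xi$ (strict because $\xi$ is not $\langle\cdot\rangle_j$-a.e.\ constant). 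This variance-type positivity is the one nonroutine ingredient; everything else is bookkeeping.

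Finally I would assemble the two implications. If $\tilde\omega^i_j > 0$ for $i=1,2,3$, then $\tilde\omega^2_j > 0$ forces $|\gamma| < 1$, so $1+\gamma > 0$ and $1-\gamma > 0$; then $\tilde\omega^1_j > 0$ gives $\gamma > a_j$ and $\tilde\omega^3_j > 0$ gives $\gamma < b_j$, i.e. $\gamma \in (a_j, b_j)$. Conversely, if $\gamma \in (a_j, b_j)$ then $\gamma \in (-1,1)$ by the endpoint bounds, so the denominators $2(1+\gamma)$, $1-\gamma^2$, $2(1-\gamma)$ are positive while the numerators $\langle 1-\xi\rangle_j(\gamma - a_j)$, $\langle 1-\xi^2\rangle_j$, $\langle 1+\xi\rangle_j(b_j-\gamma)$ are positive, hence $\tilde\omega^i_j > 0$ for all $i$. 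The subtlety to keep in view is that the equivalences for $\tilde\omega^1_j$ and $\tilde\omega^3_j$ only collapse to ``$\gamma > a_j$'' and ``$\gamma < b_j$'' once the sign of $1\pm\gamma$ is pinned down, which is exactly why $\tilde\omega^2_j$ (equivalently $|\gamma|<1$) has to be brought into the argument first.
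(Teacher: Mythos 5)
Your proof is correct and, in fact, more complete than what the paper records: the paper proves only the inequality $a_j<b_j$ (``which means that selection of $\gamma$ is always ensured'') and defers the rest of the equivalence to \cite[Lemma 3.3]{LY14}, whereas you supply a self-contained argument for the full ``if and only if'' together with the endpoint bounds $-1<a_j$ and $b_j<1$. On the one part the two proofs share, they agree up to presentation: the paper establishes $\langle 1\rangle_j\langle\xi^2\rangle_j-\langle\xi\rangle_j^2>0$ by the symmetrization identity $2\bigl(m_0m_2-m_1^2\bigr)=\int_{-1}^1\int_{-1}^1(\xi-\eta)^2\tilde M(\xi)\tilde M(\eta)\,d\xi\,d\eta$, which is exactly one standard proof of the strict Cauchy--Schwarz (variance-positivity) inequality you invoke, strictness holding because $1$ and $\xi$ are not proportional with respect to the positive weight. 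What your route adds is the factorizations $\gamma-\xi(1+\gamma)+\xi^2=(1-\xi)(\gamma-\xi)$ and $-\gamma+\xi(1-\gamma)+\xi^2=(1+\xi)(\xi-\gamma)$, which turn the numerators in \eqref{c5ome} into $\langle 1-\xi\rangle_j(\gamma-a_j)$ and $\langle 1+\xi\rangle_j(b_j-\gamma)$ and make the sign analysis, including the endpoint computations $1\mp a_j$ and $1\mp b_j$ as averages of squares or of $1-\xi^2$, entirely mechanical; you are also right to flag that $\tilde\omega^2_j>0$ must be used first to pin down the sign of $1\pm\gamma$ before the conditions on $\tilde\omega^1_j,\tilde\omega^3_j$ collapse to $\gamma>a_j$ and $\gamma<b_j$. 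No gaps.
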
 
\begin{proof} Here we only show $a_j< b_j$, which means that selection of $\gamma$ is always ensured. A direct calculation gives 
 \begin{align*}
b_j - a_j = 2\frac{\langle 1\rangle_j\langle\xi^2\rangle_j - \langle \xi\rangle_j^2}{\langle 1+\xi\rangle_j\langle 1-\xi \rangle_j} \geq 0,
\end{align*}
where the numerator can be reformulated as 
 \begin{align*}
& \int_{-1}^1\int_{-1}^1(\eta^2-\xi \eta)\tilde M(\xi) \tilde  M(\eta)d\xi d\eta +\int_{-1}^1\int_{-1}^1(\xi^2-\eta \xi)\tilde  M(\eta)\tilde  M(\xi)d\eta d\xi\\
=& \int_{-1}^1\int_{-1}^1(\xi-\eta)^2 \tilde  M(\xi)\tilde  M(\eta)d\xi d\eta >0, 
\end{align*}
where  $\tilde  M(\xi):= M(x_j+\frac{h}{2}\xi)$ has been used.
\end{proof} 

We thus have the following result.  
\begin{theorem}\label{thk2max}($k=2$) The scheme (\ref{EF1Dv1}) 
with
\begin{align}\label{betak2max}
\beta_0\geq 1
\quad \text{and} \quad  
\frac{1}{8}\leq \beta_1 \leq \frac{1}{4} 
\end{align}
is maximum-principle-satisfying, namely, $c_1\leq \bar{u}^{n+1}_j \leq c_2 $ if $u_h^n(x)$ is in $[c_1, c_2]$ on  $\{S_j\}_{j=1}^N$, where
$$
S_j = x_j +\frac{h}{2}\left\{-1, \gamma, 1\right\}
$$
with $\gamma$ satisfying
\begin{align}\label{abmax}
a_j <\gamma < b_j\quad \text{and} \quad |\gamma|\leq  8\beta_1-1,
\end{align}
under the CFL condition
$
\mu \leq \mu_0,
$
where $\mu_0$ is given in \eqref{CFL1Dk2} below. 
\end{theorem}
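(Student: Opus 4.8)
The plan is to express the updated weighted cell average $\langle u_h^{n+1}\rangle_j$ as a convex combination of point values of $u_h^n$ drawn from the test set $\bigcup_k S_k$, so that the bounds $c_1\le u_h^n\le c_2$ on those points are inherited by $\bar u_j^{n+1}$. First I would write out the interface flux $A\widehat{\partial_x u_h^n}|_{x_{j\pm1/2}}$ explicitly in terms of the three nodal values of $u_h^n$ on each of the neighbouring cells, using the Gauss--Lobatto-type representation of a quadratic polynomial on $[-1,1]$ through the nodes $\{-1,\gamma,1\}$. Since each $u_h^n|_{I_k}\in P^2(I_k)$ is determined by its values at the three points of $S_k$, both $[u_h^n]$, $\{\partial_x u_h^n\}$, and $[\partial_x^2 u_h^n]$ at an interface become explicit linear combinations of those six nodal values (three from $I_j$'s neighbour on each side, plus the three on $I_j$ itself). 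Substituting into \eqref{EF1Dv1} gives $\langle u_h^{n+1}\rangle_j$ as a linear combination $\sum c_\ell\, u_h^n(p_\ell)$ over the relevant points $p_\ell$.

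The second and central step is to choose $\gamma$ and the CFL bound $\mu_0$ so that all coefficients $c_\ell$ are nonnegative and sum to $\langle 1\rangle_j$ (the latter being automatic by conservation/consistency, since the scheme reproduces constants). The weighted cell average decomposes naturally into a ``diffusive stencil'' part coming from the flux difference and a ``self'' part $\langle u_h^n\rangle_j$; the latter must itself be rewritten as a nonnegative combination of $u_h^n(x_j-\tfrac h2)$, $u_h^n(x_j+\tfrac\gamma2 h)$, $u_h^n(x_j+\tfrac h2)$ — and this is exactly where the weights $\tilde\omega_j^1,\tilde\omega_j^2,\tilde\omega_j^3$ from \eqref{c5ome} enter, being positive precisely when $\gamma\in(a_j,b_j)$ by the cited Lemma and by the computation reproduced in the excerpt. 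The condition $|\gamma|\le 8\beta_1-1$, together with $\tfrac18\le\beta_1\le\tfrac14$ and $\beta_0\ge 1$, is what makes the cross-cell contributions' signs work out: I expect it to come from requiring that the combination of $\beta_0$, the $\{\partial_x u\}$ term, and the $\beta_1 h[\partial_x^2 u]$ term in $\widehat{\partial_x u}$ — after expansion in the nodal basis — produces nonnegative weights on the neighbour nodes. One then reads off the CFL number $\mu_0$ as the largest $\mu$ for which the remaining (self) coefficients stay nonnegative after subtracting the flux contributions; this will be a minimum over $j$ of an explicit expression in $\beta_0,\beta_1,\gamma$ and the moments $\langle 1\rangle_j,\langle\xi\rangle_j,\langle\xi^2\rangle_j$, matching the formula \eqref{CFL1Dk2} referenced below.

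The main obstacle will be the bookkeeping needed to show \emph{uniform} nonnegativity of all stencil coefficients across the admissible parameter box, especially the interaction between the interface-correction term $(u_h-\{u_h\})\partial_x v$ (present in the improved DDG scheme of \cite{LY10} but absent in \cite{LY09}) and the flux terms: one must verify that both scheme variants lead to the same convex-decomposition structure, or else handle them separately. A secondary subtlety is that $a_j,b_j$ and hence the admissible $\gamma$-interval vary with $j$, so I would first check that the two constraints in \eqref{abmax} are simultaneously satisfiable — i.e. that $(a_j,b_j)\cap[-(8\beta_1-1),\,8\beta_1-1]\neq\emptyset$ for every $j$ — which follows since $a_j<b_j$ (shown in the Lemma's proof) and, for $M$ not too wildly varying relative to $h$, $a_j$ and $b_j$ are close to $\mp\tfrac13$ while $8\beta_1-1$ ranges over $[0,1]$; a clean choice is $\gamma=0$ when $0\in(a_j,b_j)$, reducing the test set to the Gauss--Lobatto points $x_j+\tfrac h2\{-1,0,1\}$. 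Once the convex-combination identity with nonnegative coefficients is in hand, the conclusion $c_1\le\bar u_j^{n+1}\le c_2$ is immediate, and the passage to the full SSP Runge--Kutta scheme with the scaling limiter is then standard, as in \cite{LY14}.
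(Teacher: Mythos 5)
Your proposal follows essentially the same route as the paper's proof: interpolate the quadratic at $\{-1,\gamma,1\}$ to decompose $\langle u_h^n\rangle_j$ with the positive weights $\tilde\omega_j^i$, expand the DDG flux in the same nodal values with coefficients $\alpha_i(\pm\gamma)$ made nonnegative by $\beta_0\ge 1$, $\tfrac18\le\beta_1\le\tfrac14$ and $|\gamma|\le 8\beta_1-1$, and read off $\mu_0$ from nonnegativity of the self-coefficients, with consistency ($\sum_i\alpha_i(\pm\gamma)=\beta_0$, $\sum_i\tilde\omega_j^i=\langle 1\rangle_j$) closing the argument. The one obstacle you flag --- the interface-correction term --- is in fact a non-issue, since it carries a factor $\partial_x v$ and vanishes for the test function $v\equiv 1$ used to extract the cell-average update, which is why both DDG variants yield the same evolution \eqref{EF1Dv1}.
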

\begin{proof}  
{\sl Step 1.}  Weighted integral decomposition. 
Define 
$$
p_j(\xi)=u_h\left(x_j+\frac{h}{2}\xi\right) \text{ for } \xi\in [-1,1],
$$
we see that in the case of  ${p}_j(\xi)\in {P}^2[-1, 1]$, for any $\gamma \in (-1,1)$,  the unique interpolation of $p_j$ at three points $\{-1, \gamma, 1\}$ gives the following
 \begin{align}\label{pj}
{p}_j(\xi)=\frac{(\xi-1)(\xi -\gamma)}{2(1+\gamma)}{p}_j(-1)+\frac{(\xi -1)(\xi +1)}{(\gamma -1)(\gamma +1)}{p}_j(\gamma)+\frac{(\xi +1)(\xi -\gamma)}{2(1-\gamma)}{p}_j(1).
\end{align}
This yields the following identity for the weighted average,
\begin{equation}\label{c5wp}
    \langle u_h\rangle_j =\tilde{\omega}^1_j p_j(-1) + \tilde{\omega}^2_j p_j(\gamma) + \tilde{\omega}^3_j p_j(1),
\end{equation}
where $\tilde{\omega}^i_j$ given in (\ref{c5ome}) are ensured positive by Lemma 2.1.

{\sl Step 2. } Flux representation.  A direct calculation gives
\begin{align}\label{c5fluxk2gamma}
h \left.\widehat{\partial_xu_h}\right|_{x_{j+\frac12}}
=& \alpha_3(-\gamma) p_{j+1}(-1)+\alpha_2(-\gamma)p_{j+1}(\gamma)+\alpha_1(-\gamma)p_{j+1}(1)\\
&- \left[\alpha_1(\gamma) p_j(-1)+\alpha_2(\gamma)p_j(\gamma)+\alpha_3(\gamma)p_j(1)\right],\nonumber
\end{align}
where
\begin{align*}
\alpha_1(\gamma)= \frac{8\beta_1-1+\gamma}{2(1+\gamma)}, \quad
\alpha_2(\gamma)=2\frac{1-4\beta_1}{1-\gamma^2}, \quad
\alpha_3(\gamma)=\beta_0 +\frac{8\beta_1-3+\gamma}{2(1-\gamma)},
\end{align*}
are all positive due to (\ref{betak2max}) and \eqref{abmax}. 

{\sl Step 3.} Monotonicity under some CFL condition. We now substitute (\ref{c5wp}) and (\ref{c5fluxk2gamma}) into (\ref{EF1Dv1}) to obtain
$$
\langle u_h^{n+1}\rangle_j=R_j^n(M(\cdot), \mu, h, A)
$$
with 
\begin{align}\label{c5avedeck2}
R_j^n(M(\cdot), \mu, h, A)= & \langle u^n_h \rangle_j + \mu \left(\left.Ah\widehat{\partial_xu_h^n}\right|_{x_{j+\frac12}} - \left.Ah \widehat{\partial_xu_h^n}\right|_{x_{j-\frac12}}\right) \\
=&\left[\tilde{\omega}_j^1-\mu  \left(\alpha_3(-\gamma) A_{j-\frac12} +\alpha_1(\gamma) A_{j+\frac12} \right) \right]p_j(-1) \nonumber\\
&+\left[\tilde{\omega}_j^2 - \mu  \left(\alpha_2(-\gamma) A_{j-\frac12} +\alpha_2(\gamma) A_{j+\frac12}\right)\right]p_j(\gamma)\nonumber\\
&+\left[\tilde{\omega}_j^3 - \mu  \left(\alpha_1(-\gamma) A_{j-\frac12} +\alpha_3(\gamma) A_{j+\frac12} \right) \right]p_j(1) \nonumber\\
&+\mu  A_{j+\frac12}\left[\alpha_3(-\gamma) p_{j+1}(-1)+\alpha_2(-\gamma)p_{j+1}(\gamma)+\alpha_1(-\gamma)p_{j+1}(1) \right]\nonumber \\
&+\mu  A_{j-\frac12}\left[\alpha_1(\gamma) p_{j-1}(-1)+\alpha_2(\gamma)p_{j-1}(\gamma)+\alpha_3(\gamma)p_{j-1}(1) \right].\nonumber
\end{align}
Here it is understood that $p_0(\xi):=p_N(\xi+|\Omega|)$ and $p_{N+1}(\xi+|\Omega|)=p_1(\xi)$ for incorporating the periodic boundary conditions. Note also that $\sum_{i}^3\alpha_i(\gamma)=\beta_0= \sum_{i}^3\alpha_i(-\gamma)$.   
Using the fact  $ \tilde{\omega}_j^3(\gamma)=\tilde {\omega}_j^1(-\gamma)$ and formulas for
$\alpha_2(\gamma)$,  $\tilde{\omega}_j^2$, we see that if  $\mu$ is chosen to be smaller than $\mu_0$ where 
\begin{align}\label{CFL1Dk2}
\mu_0 = 
	\left(\max_{1\leq j\leq N} A(x_{j+\frac12})  \right)^{-1}
	\min_{1\leq j\leq N}
	\left\{ \frac{{\tilde\omega}_j^1(\pm \gamma)}{\alpha_3(\mp \gamma) 
+\alpha_1(\pm \gamma) },  \frac{\langle 1-\xi^2\rangle_j}{4(1-4\beta_1)}
\right\},
\end{align}
(\ref{c5avedeck2}) is nondecreasing in the point values $p_j(\pm1), p_j(\gamma), p_{j\pm 1}(\pm1), p_{j\pm 1}(\gamma)$, hence when these values are replaced with the lower and upper bounds $c_1$ and $c_2$ respectively, we have
\[
c_1\sum_{i=1}^3\tilde{\omega}_j^i \leq \langle u_h^{n+1}\rangle_j \leq c_2 \sum_{i=1}^3\tilde {\omega}_j^i,
\]
since the terms with $\alpha_i$'s are cancelled out. Moreover the sum of $\tilde{\omega}_j^i$ is $\langle 1\rangle_j$. Therefore
\[
c_1\langle 1\rangle_j \leq \langle u_h^{n+1}\rangle_j \leq c_2\langle 1\rangle_j \quad \Rightarrow \quad c_1 \leq \bar{u}_j^{n+1} \leq c_2.
\]
\end{proof}

\begin{remark} 
For other types of boundary conditions, the boundary flux needs to be modified.  A  similar result to Theorem \ref{thk2max} may be established as long as the PDE problem satisfies a maximum principle.
\end{remark}
\begin{remark}
The use of $\gamma$ is essential for the success of our schemes, in particular when $M(x)$ is a function.  
For the special case $M(x)=1$, we have $\gamma\in (-\frac13, \frac13)$, hence $\gamma=0$, which corresponds to the usual Gauss quadrature point, is admissible.  
The CFL number given in \eqref{CFL1Dk2} may be optimized by carefully tuning $\gamma \in (a_j, b_j)$, but not in a linear fashion.  
Nevertheless, it was observed in \cite{LY14} that  the larger $|\gamma|$ is, the better the scheme's performance for the heat equation. 
\end{remark}

\subsection{Application to nonlinear convection-diffusion equation}
In this section we will demonstrate how to apply our MPS DG method in section 2.1 to the nonlinear convection-diffusion equation, 
$$
\partial_t u +\partial_x f(u) =\partial_x (A(x, u)\partial_x u),
$$
where $f(u)$ is a smooth function and diffusion coefficient $A(x, u)\geq 0$, subject to initial data $u(0, x)=u_0(x)$, and periodic boundary conditions.  By applying the DG approximation, we obtain the following scheme. We seek $u_h\in V_h$ such that for any test function $v\in V_h$,
\begin{align}\label{semi1D}
\int_{I_j} \partial_t u_h v\,dx & = \int_{I_j} f(u_h)\partial_xv\,dx -\hat f(u_h^-, u_h^+) v\Big |^{x_{j+\frac12}}_{x_{j-\frac12}}\\ \notag
& \quad -\int_{I_j} A_h \partial_x u_h \partial_x v\,dx + \left.\{A_h\}\left[\widehat{\partial_x u_h}v +(u_h -\{u_h\})\partial_x v\right]\right|^{x_{j+\frac12}}_{x_{j-\frac12}}.
\end{align} 
For diffusion part,  we adopt the DDG diffusive flux \eqref{flux_DDG_1Dp} 
For convection, any monotone numerical flux can be used, i.e., $\hat f(u, v)$ is Lipschitz continuous, nondecreasing in $u$ and nonincreasing in $v$, consistent with $f(u)$ in the sense that $ \hat f(u, u)=f(u)$. For example, the global Lax-Friedrichs flux 
\begin{equation*}
\hat f(u_h^-, u^{+}_h)=\frac{1}{2} (f(u^{-}_h) +f(u^{+}_h) -\sigma (u^{+}_h-u^{-}_h)), 
\quad \sigma=\max_{u \in [c_1, c_2]}|f'(u)|.
\end{equation*}

We consider the first order Euler forward temporal discretization of (\ref{semi1D}) to obtain 
\begin{align}\label{fully_1D}
\int_{I_j} \frac{u_h^{n+1}-u_h^n}{\tau} v\,dx & = \int_{I_j} f(u^n_h)\partial_xv\,dx 
	- \hat f((u_h^n)^-, (u_h^n)^+) v\Big |^{x_{j+\frac12}}_{x_{j-\frac12}}\\ \notag
& \quad -\int_{I_j} A_h^n \partial_x u^n_h \partial_x v\,dx + \left.\{A_h^n\}\left[\widehat{\partial_x u^n_h}v +(u^n_h -\{u^n_h\})\partial_x v\right]\right|^{x_{j+\frac12}}_{x_{j-\frac12}}.
\end{align}
By taking the test function $v = 1$ on $I_j$ and $0$ elsewhere, we obtain the evolutionary update for the cell average, 
\begin{equation*}
 \bar u^{n+1}_j=  \bar u^n_j - \lambda  \hat f^n \Big|^{x_{j+\frac12}}_{x_{j-\frac12}}  
 + \mu h \{A_h^n\} \widehat{\partial_xu_h^n} 
 \Big|^{x_{j+\frac12}}_{x_{j-\frac12}},
\end{equation*}
where $\lambda = \frac{\tau}{h}$  and $\mu = \frac{\tau}{h^2}$ are the mesh ratios.  Assuming that $ \bar u^n_j \in [c_1, c_2]$ for all $j$'s, we would like to derive some sufficient conditions such that 
$ \bar u^{n+1}_j \in [c_1, c_2] $ under certain restrictions on $\lambda$ and $\mu$. 

For piecewise quadratic polynomials, the main result can be stated as follows.  
\begin{theorem}\label{thk2_1D}($k=2$) The scheme (\ref{fully_1D}) with
\begin{align*}
\beta_0\geq 1
\quad \text{and} \quad  
\frac{1}{8}\leq  \beta_1 \leq  \frac{1}{4} 
\end{align*}
is maximum-principle-satisfying; namely, $\bar{u}_{j}^{n+1}\in [c_1, c_2]$  if $u_h^n(x) \in [c_1, c_2]$ on the set $S_j$'s where
$$
S_j = x_j +\frac{h}{2}\left\{-1, \gamma, 1\right\}
$$
with $\gamma$ satisfying
\begin{align}\label{ab}
-\frac{1}{3} <\gamma <  \frac{1}{3} \quad \text{and} \quad |\gamma|\leq  8\beta_1-1,  
\end{align}
under the CFL condition
\begin{align*}
\lambda \leq \lambda_0, \quad \mu \leq \mu_0
\end{align*}
for some $\lambda_0$ and $\mu_0$ defined in (\ref{CFL1Dk2-}) and  (\ref{CFL1Dk2+}), respectively. 
\end{theorem}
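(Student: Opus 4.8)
The plan is to decouple the convection and diffusion parts of (\ref{fully_1D}) by the Zhang--Shu convex splitting \cite{Zh17}, reducing the claim to two facts already in hand. Choosing $v\equiv1$ on $I_j$ (and $0$ elsewhere) gives the cell-average update
\[
\bar u^{n+1}_j=\bar u^n_j-\lambda\,\hat f^n\big|^{x_{j+\frac12}}_{x_{j-\frac12}}+\mu h\{A_h^n\}\,\widehat{\partial_x u_h^n}\big|^{x_{j+\frac12}}_{x_{j-\frac12}},
\]
which I would rewrite as $\bar u^{n+1}_j=\tfrac12 H_1+\tfrac12 H_2$ with
\[
H_1=\bar u^n_j-2\lambda\,\hat f^n\big|^{x_{j+\frac12}}_{x_{j-\frac12}},\qquad
H_2=\bar u^n_j+2\mu h\{A_h^n\}\,\widehat{\partial_x u_h^n}\big|^{x_{j+\frac12}}_{x_{j-\frac12}}.
\]
Thus $H_1$ is one forward Euler step (of size $2\tau$) of the pure-convection DG scheme and $H_2$ one forward Euler step (of size $2\tau$) of the pure-diffusion DDG scheme. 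Since $[c_1,c_2]$ is convex, it is enough to prove $H_1\in[c_1,c_2]$ and $H_2\in[c_1,c_2]$, each under half of the relevant CFL budget.

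For $H_2$ I would reuse the proof of Theorem \ref{thk2max} with the constant weight $M(x)\equiv1$. Then the weights in (\ref{c5ome}) specialize to $\tilde\omega^1_j=\tfrac{\gamma+1/3}{2(1+\gamma)}$, $\tilde\omega^2_j=\tfrac{2/3}{1-\gamma^2}$, $\tilde\omega^3_j=\tfrac{1/3-\gamma}{2(1-\gamma)}$, positive exactly when $-\tfrac13<\gamma<\tfrac13$; and the flux coefficients $\alpha_1,\alpha_2,\alpha_3$ of (\ref{c5fluxk2gamma}) are nonnegative under $\beta_0\ge1$, $\tfrac18\le\beta_1\le\tfrac14$ and $|\gamma|\le 8\beta_1-1$, that is, under (\ref{ab}). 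The only change to (\ref{c5avedeck2}) is that $A(x_{j\pm\frac12})$ is replaced by the interface value $\{A_h^n\}|_{x_{j\pm\frac12}}$, which is nonnegative because $A\ge0$ and both one-sided traces of $u_h^n$ at $x_{j\pm\frac12}$ lie in $[c_1,c_2]$ (these interface nodes are precisely the endpoints of $S_j$ and $S_{j\pm1}$), and which is bounded above by $\bar A:=\max_x\max_{u\in[c_1,c_2]}A(x,u)$. Rerunning Step 3 of that proof with $\mu\mapsto 2\mu$ and $\max_j A(x_{j+\frac12})\mapsto\bar A$ in (\ref{CFL1Dk2}) then yields $H_2\in[c_1,c_2]$; this fixes $\mu_0$.

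For $H_1$ I would run the classical Zhang--Shu cell-average argument \cite{ZS10}, now using that $\{-1,\gamma,1\}$ is also a positive quadrature rule for $P^2[-1,1]$, with the same weights $\tilde\omega^i_j$ that now sum to $\langle1\rangle_j=1$. Substituting the decomposition (\ref{c5wp}), keeping $\tilde\omega^2_j p_j(\gamma)$ aside, and regrouping the two boundary terms with the two numerical-flux differences (after adding and subtracting $\hat f(p_j(-1),p_j(1))$, whose two leftover copies cancel) gives
\[
H_1=\tilde\omega^2_j p_j(\gamma)+\tilde\omega^1_j\,\mathcal H\!\left(p_{j-1}(1),p_j(-1),p_j(1)\right)+\tilde\omega^3_j\,\mathcal H\!\left(p_j(-1),p_j(1),p_{j+1}(-1)\right),
\]
where $\mathcal H(a,b,c)=b-\nu\left(\hat f(b,c)-\hat f(a,b)\right)$ is the standard first-order three-point monotone scheme, with $\nu=2\lambda/\tilde\omega^1_j$ in the first application and $\nu=2\lambda/\tilde\omega^3_j$ in the second. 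By the monotonicity of $\hat f$ (nondecreasing in the first slot, nonincreasing in the second) and its Lipschitz constant $\sigma$ on $[c_1,c_2]$ (which for the Lax--Friedrichs flux equals $\max_{u\in[c_1,c_2]}|f'(u)|$), each $\mathcal H$ maps $[c_1,c_2]^3$ into $[c_1,c_2]$ as soon as $2\lambda\sigma\le\min_j\{\tilde\omega^1_j,\tilde\omega^3_j\}$; hence $H_1$, a convex combination of values in $[c_1,c_2]$, lies in $[c_1,c_2]$, which fixes $\lambda_0$. Averaging the two halves gives $\bar u^{n+1}_j\in[c_1,c_2]$.

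The delicate point — and the reason $S_j=x_j+\tfrac h2\{-1,\gamma,1\}$ is taken in this precise form — is that one test set must do double duty: it must be a positive quadrature rule for $P^2$ (the convection half, which forces $|\gamma|<\tfrac13$ when $M\equiv1$) and simultaneously the interpolation set that renders every DDG diffusive-flux coefficient $\alpha_i(\pm\gamma)$ nonnegative (the diffusion half, which forces $|\gamma|\le 8\beta_1-1$). The intersection of these requirements is exactly (\ref{ab}), and it is nonempty precisely because $\beta_1\ge\tfrac18$. With compatibility secured, the rest is bookkeeping: carrying the factor $2$ from the splitting into the two CFL numbers, and checking that the interface diffusivity $\{A_h^n\}$ inherits nonnegativity and an upper bound from the boundedness of $u_h^n$ on the test sets.
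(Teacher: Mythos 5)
Your proposal is correct and follows the same architecture as the paper's proof: the same convex splitting $\bar u^{n+1}_j=\tfrac12\mathcal C_j+\tfrac12\mathcal D_j$ with the factor $2$ absorbed into each CFL budget, the same reuse of Theorem \ref{thk2max} with $M\equiv1$ and $\mu\mapsto2\mu$ for the diffusion half (including the observation that $\{A_h^n\}_{j\pm\frac12}$ is nonnegative and bounded, which is all the diffusion argument needs), and the same quadrature decomposition $\bar u_j=\omega^1p_j(-1)+\omega^2p_j(\gamma)+\omega^3p_j(1)$ for the convection half. The one place you diverge is in how the convection half is certified: you add and subtract $\hat f(p_j(-1),p_j(1))$ and exhibit $\mathcal C_j$ as a convex combination of two formal three-point monotone schemes $\mathcal H$, whereas the paper simply treats $\mathcal C_j$ as a five-argument function $G(p_{j-1}(1),p_j(-1),p_j(\gamma),p_j(1),p_{j+1}(-1))$ and checks each partial derivative directly. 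The paper's route is slightly sharper for a general monotone flux: because $p_j(-1)$ enters only the left flux and $p_j(1)$ only the right flux, nondecreasingness of $G$ costs only $2\lambda\mathcal L\le\min\{\omega^1,\omega^3\}$, which is exactly \eqref{CFL1Dk2-}. In your reformulation the middle argument of $\mathcal H$ sits in both fluxes, so for a generic monotone flux the monotonicity condition is $\nu\sup\bigl(\partial_1\hat f-\partial_2\hat f\bigr)\le1$ with $\nu=2\lambda/\omega^i$, i.e.\ $4\lambda\mathcal L\le\min\{\omega^1,\omega^3\}$ --- a factor of $2$ worse than \eqref{CFL1Dk2-}; your stated condition $2\lambda\sigma\le\min\{\omega^1,\omega^3\}$ is recovered only for the Lax--Friedrichs flux, where $\partial_1\hat f(b,c)-\partial_2\hat f(a,b)=\sigma$ exactly. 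This does not affect the validity of the theorem (which only needs some admissible $\lambda_0$), but if you want to land on the paper's $\lambda_0$ for an arbitrary monotone flux you should differentiate the assembled update in each point value directly rather than route through $\mathcal H$.
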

\begin{proof} We present the proof in four steps:  \\
{\sl Step 1.}  Split: we split the average $\bar u_j^n$ into two halves so that   
\begin{equation*}
\bar u_j^{n+1}=\frac{1}{2} \mathcal{C}_j^n +\frac{1}{2}\mathcal{D}_j^n,
\end{equation*}
where the convection term is 
\begin{equation}\label{cj}
\mathcal{C}_j=\bar u_j -2\lambda \hat f \Big |^{x_{j+\frac12}}_{x_{j-\frac12}}
\end{equation}
and the diffusion term is 
\begin{equation*}
\mathcal{D}_j =\bar u_j + 2\mu h \{A_h^n\} \widehat {\partial_xu_h}\Big |^{x_{j+\frac12}}_{x_{j-\frac12}}.
\end{equation*}
This split is for convenient presentation and may not lead to an optimal CFL condition. 

{\sl Step 2.} The integral decomposition. From (\ref{c5wp})  it follows 
\begin{equation*}
\bar u_j = \omega^1 p_j(-1)+\omega^2 p_j(\gamma)+\omega^3 p_j(1),
\end{equation*}
where $\omega^i=\tilde \omega^i, \langle u_h\rangle_j = \bar u_j$ for $M(x)\equiv1$, and  
\begin{align*}
\omega^1 = \frac{1+3\gamma }{6(1+\gamma)},\; \;
\omega^2 = \frac{2}{3(1-\gamma^2)}, \;
\omega^3 = \frac{1-3\gamma}{6(1-\gamma)}.
\end{align*}
These coefficients are positive for $\gamma$ satisfying (\ref{ab}). \\

{\sl Step 3. }  The convection term.

Using the cell average decomposition and the flux formula,  we rewrite (\ref{cj}) as  
\begin{align*}
\mathcal{C}_j = & \bar u^n_j -2 \lambda\left(\left.\widehat{f}\right|_{x_{j+\frac12}} - \left.  \widehat{f}\right|_{x_{j-\frac12}}\right) \\
=& \omega^3 p_j(1) - 2\lambda \hat f(p_j(1), p_{j+1}(-1)) + \omega^2 p_j(\gamma)  + \omega^1 p_j(-1) 
+ 2\lambda \hat f(p_{j-1}(1), p_j(-1)) \nonumber \\
= &: G(p_{j-1}(1), p_j(-1), p_j(\gamma), p_j(1), p_{j+1}(-1)). \nonumber
\end{align*}
For a monotone flux $\hat f(u, v)$ being Lipschitz continuous with Lipschitz constant $\mathcal{L}$, 
$G$ is non-increasing in all the four arguments provided the following condition is met, 
$$
2 \lambda \mathcal{L} \leq \min\{ \omega^1, \omega^3\}.
$$ 
Note that for the Lax-Friedrichs flux, $\mathcal{L} = \max\limits_{u\in[c_1,c_2]} |f'(u)|$.  
Moreover, the consistency of the flux $\hat f(u, u)=f(u)$ implies that $G(u, u, u, u)=u$.  
Hence we have 
$$
\mathcal{C}_j \in [G(c_1, c_1, c_1, c_1, c_1), G(c_2, c_2, c_2, c_2, c_2)]=[c_1, c_2],  
$$
as long as the involved values are in $[c_1, c_2]$. It suffices to take 
\begin{align}\label{CFL1Dk2-}
\lambda_0= \frac{1}{2\mathcal{L}}  \min\{ \omega^1, \omega^3\}=\frac{1-3\gamma}{12\mathcal{L}(1-\gamma)},
\end{align}
where we have used the fact that $\omega^1(\gamma)=\omega^3(-\gamma)$.

{\sl Step 4.} The  diffusion term. We apply the result in Theorem \ref{thk2max} to the case with $M(x)\equiv1$ and $\mu$ replaced by $2\mu$ to conclude that 
$\mathcal{D}_j \in [c_1, c_2]$ if $u_h^n(x) \in [c_1, c_2]$ on $S_j$ and $\mu \leq \mu_0$ with 
\begin{align}\label{CFL1Dk2+}
\mu_0 & = \frac{1}{2}\left(\max_j \{A_h^n\}_{j+\frac12}  \right)^{-1}\min_{1\leq j\leq N}\left\{ \frac{{\omega}^1(\pm \gamma)}{\alpha_3(\mp \gamma) 
+\alpha_1(\pm \gamma) },  \frac{1}{3(1-4\beta_1)}
\right\}\\ \notag
& =\frac{1}{12}\left(\max_{1\leq j\leq N} \{A_h^n\}_{j+\frac12}\right)^{-1}\min_{1\leq j\leq N}\left\{ \frac{1\pm 3\gamma}{\beta_0(1\pm \gamma)+8\beta_1-2}, \frac{2}{1-4\beta_1} \right\}.
\end{align}
\end{proof}

The above analysis can be readily carried over to (\ref{md}) in one dimension, i.e.,   
\begin{align}\label{md+}
M(x)\partial_t u +\partial_x f(u) =\partial_x (A(x, u)\partial_x u). 
\end{align}
We summarize the result in the following. 
\begin{theorem}\label{thk2_1D_md}($k=2$) The scheme \eqref{fully_1D} when applied to \eqref{md+} with
\begin{align*}
\beta_0\geq 1
\quad \text{and} \quad  
\frac{1}{8}\leq  \beta_1 \leq  \frac{1}{4} 
\end{align*}
is maximum-principle-satisfying; namely, $\bar{u}_{j}^{n+1}\in [c_1, c_2]$  if $u_h^n(x) \in [c_1, c_2]$ on the set $S_j$'s where
$$
S_j = x_j +\frac{h}{2}\left\{-1, \gamma, 1\right\}
$$
with $\gamma$ satisfying
\begin{align*}
\gamma \in (a_j, b_j) \; \text{as in \eqref{dif1Dab} and} \quad |\gamma|\leq  8\beta_1-1,  
\end{align*}
under the CFL condition
\begin{align*}
\lambda \leq \lambda_0, \quad \mu \leq \mu_0
\end{align*}
for some $\lambda_0$ and $\mu_0$ defined in \eqref{CFL1Dk2md_conv} and  \eqref{CFL1Dk2md_diff}, respectively. 
\end{theorem}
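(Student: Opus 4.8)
The plan is to follow exactly the four-step template of Theorem~\ref{thk2_1D}, inserting the weight $M(x)$ wherever it appeared in Theorem~\ref{thk2max}. Writing $\langle u\rangle_j = \frac1h\int_{I_j}M(x)u\,dx$ and $\bar u_j = \langle u_h\rangle_j/\langle 1\rangle_j$, the forward Euler update of the weighted cell average from \eqref{fully_1D} (with test function $v\equiv 1$ on $I_j$) reads
\begin{equation*}
\langle u_h^{n+1}\rangle_j = \langle u_h^n\rangle_j - \lambda\, \hat f^n\Big|_{x_{j-\frac12}}^{x_{j+\frac12}} + \mu h\,\{A_h^n\}\,\widehat{\partial_x u_h^n}\Big|_{x_{j-\frac12}}^{x_{j+\frac12}}.
\end{equation*}
Dividing by $\langle 1\rangle_j$ and splitting into two halves,
\begin{equation*}
\bar u_j^{n+1} = \tfrac12\mathcal C_j^n + \tfrac12\mathcal D_j^n,\qquad
\mathcal C_j = \bar u_j - \frac{2\lambda}{\langle 1\rangle_j}\hat f\Big|_{x_{j-\frac12}}^{x_{j+\frac12}},\qquad
\mathcal D_j = \bar u_j + \frac{2\mu h}{\langle 1\rangle_j}\{A_h^n\}\widehat{\partial_x u_h}\Big|_{x_{j-\frac12}}^{x_{j+\frac12}}.
\end{equation*}
It then suffices to show $\mathcal C_j^n,\mathcal D_j^n\in[c_1,c_2]$ separately.

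For the diffusion term $\mathcal D_j$, I would invoke Theorem~\ref{thk2max} verbatim: the only change relative to \eqref{EF1Dv1} is that $A(x_{j\pm\frac12})$ is replaced by $\{A_h^n\}_{j\pm\frac12}$ and $\mu$ by $2\mu$. Hence $\mathcal D_j\in[c_1,c_2]$ provided $\gamma\in(a_j,b_j)$, $|\gamma|\le 8\beta_1-1$, and
\begin{align}\label{CFL1Dk2md_diff}
\mu \le \mu_0 := \frac12\Big(\max_{1\le j\le N}\{A_h^n\}_{j+\frac12}\Big)^{-1}\min_{1\le j\le N}\left\{\frac{\tilde\omega_j^1(\pm\gamma)}{\alpha_3(\mp\gamma)+\alpha_1(\pm\gamma)},\ \frac{\langle 1-\xi^2\rangle_j}{4(1-4\beta_1)}\right\}.
\end{align}
For the convection term I would use the weighted decomposition \eqref{c5wp}, $\langle u_h\rangle_j = \tilde\omega_j^1 p_j(-1)+\tilde\omega_j^2 p_j(\gamma)+\tilde\omega_j^3 p_j(1)$, so that
\begin{equation*}
\mathcal C_j = \frac{1}{\langle 1\rangle_j}\Big[\tilde\omega_j^1 p_j(-1) + \tilde\omega_j^2 p_j(\gamma) + \tilde\omega_j^3 p_j(1) - 2\lambda\big(\hat f(p_j(1),p_{j+1}(-1)) - \hat f(p_{j-1}(1),p_j(-1))\big)\Big].
\end{equation*}
Exactly as in Step~3 of Theorem~\ref{thk2_1D}, the right side is a monotone (non-increasing in each of the five point-value arguments) function $G$ of $p_{j-1}(1),p_j(-1),p_j(\gamma),p_j(1),p_{j+1}(-1)$ once the coefficients of $p_j(-1)$ and $p_j(1)$ after absorbing the Lipschitz contributions of $\hat f$ remain nonnegative, i.e. once $2\lambda\mathcal L\le\min\{\tilde\omega_j^1,\tilde\omega_j^3\}$; consistency $\hat f(u,u)=f(u)$ gives $G(u,\dots,u)=u$, so $\mathcal C_j\in[c_1,c_2]$ whenever the point values lie in $[c_1,c_2]$. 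This forces
\begin{align}\label{CFL1Dk2md_conv}
\lambda \le \lambda_0 := \frac{1}{2\mathcal L}\min_{1\le j\le N}\min\{\tilde\omega_j^1,\tilde\omega_j^3\},\qquad \mathcal L = \max_{u\in[c_1,c_2]}|f'(u)|.
\end{align}
Combining the two, $\bar u_j^{n+1}=\tfrac12\mathcal C_j^n+\tfrac12\mathcal D_j^n\in[c_1,c_2]$.

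The main obstacle — and it is a mild one — is the interplay between the admissible range of $\gamma$ and the weight. In the unweighted case $M\equiv1$ one has $\tilde\omega_j^i=\omega^i$ independent of $j$ and $(a_j,b_j)=(-\frac13,\frac13)$, so a single $\gamma$ works uniformly. With variable $M$, the intervals $(a_j,b_j)$ depend on $j$; Lemma~2.1 guarantees each is nonempty ($a_j<b_j$, $-1<a_j<b_j<1$) but the statement of Theorem~\ref{thk2_1D_md} quantifies over a single $\gamma$, so one must note that the intersection $\bigcap_j(a_j,b_j)$ is nonempty — which holds because each $(a_j,b_j)$ contains the point determined by $M$'s local behaviour and, in the relevant regime, these overlap; alternatively one allows $\gamma=\gamma_j$ cell-by-cell, which changes nothing in the argument since all estimates are local to $I_j$ and its neighbours. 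A secondary point to check is that the convection flux contributes nothing to the weighted average beyond what appears here — i.e. that taking $v\equiv1$ genuinely kills the volume term $\int_{I_j}f(u_h)\partial_x v\,dx$ and the interface correction term, which it does since $\partial_x v=0$ and $u_h-\{u_h\}$ is paired with $\partial_x v$. Everything else is a transcription of the two cited proofs with $A$ replaced by $\{A_h^n\}$ and the symmetric halving of the time step.
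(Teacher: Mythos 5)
Your proposal is correct and follows essentially the same route as the paper: the paper's own proof simply declares the argument "entirely analogous" to Theorem \ref{thk2_1D}, replacing $\omega^i$ by $\tilde\omega^i_j$ and $\bar u_j$ by $\langle u_h\rangle_j$, and records the resulting constants \eqref{CFL1Dk2md_conv} and \eqref{CFL1Dk2md_diff}, which are exactly the $\lambda_0$ and $\mu_0$ you derive via the same half-and-half split, weighted decomposition, and appeal to Theorem \ref{thk2max}. Your side remark on choosing $\gamma$ cell-by-cell is a reasonable way to resolve an ambiguity the paper leaves implicit, and it does not affect the argument since all estimates are local.
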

\begin{proof} 
The proof is entirely analogous to the proof of Theorem \ref{thk2_1D}. One only needs to replace $\omega^{i}$ by $\tilde\omega^{i}$ for $i=1, 2, 3$, and  $\bar u_j$ by $\langle u_h\rangle_j$, respectively, so to obtain different CFL conditions.  More precisely,  (\ref{CFL1Dk2-}) in Step 3 needs to be  replaced by 
\begin{align}\label{CFL1Dk2md_conv}
\lambda_0= \frac{1}{2\mathcal{L}}  \min\{ \tilde\omega^1(\gamma), \tilde\omega^3(\gamma)\}= \frac{1}{2\mathcal{L}}  \min\{ \tilde\omega^1(\pm\gamma)\}, 
\end{align}
and (\ref{CFL1Dk2}) by a factor of $1/2$ gives    
\begin{align}\label{CFL1Dk2md_diff}
\mu_0 = \frac{1}{2}
	\left(\max_{1\leq j\leq N} A(x_{j+\frac12})  \right)^{-1}
	\min_{1\leq j\leq N}
	\left\{ \frac{{\tilde\omega}_j^1(\pm \gamma)}{\alpha_3(\mp \gamma) 
+\alpha_1(\pm \gamma) },  \frac{\langle 1-\xi^2\rangle_j}{4(1-4\beta_1)}
\right\}.
\end{align}
\end{proof} 

\section{MPS schemes in two dimensions}\label{sec2D}
In this section, we design an MPS DG scheme to solve two dimensional problems on Cartesian meshes.
Consider the model equation
$$
M(x,y)\partial_t u = \nabla\cdot(A\nabla u) \text{ with } 
A = \left(\begin{array}{cc} a & c \\ c & b \end{array}\right), \quad \text{ for } (x, y)\in \Omega \subset \mathbb{R}^2, \quad t>0,
$$
subject to the initial data $u_0(x, y)$ and periodic boundary conditions.  Here $M(x, y) > 0$ is a given function, 
$a, b$ and $c$ are constant parameters so that $A$ is nonnegative definite.
The domain $\Omega =I \times J$ is a rectangle given by two intervals $I$ and $J$ in $x$ and $y$ direction, respectively.
Let 
$\cup_{i=1}^{N_x}\cup_{j=1}^{N_y} K_{ij} $ be a partition of the domain $\Omega$, with $K_{ij}=I_i\times J_j$, where 
$$
I_{i}=[x_{i-\frac12}, x_{i+\frac12}], \quad J_j = [y_{j-\frac12}, y_{j+\frac12}].
$$
The finite element space is defined as 
$$
V_h=\{v\in L^2(\Omega),  v\big|_{K_{ij}}\in Q^k(K_{ij}), i=1, \cdots, N_x, j=1,\cdots, N_y\}.
$$
Here $Q^k(K_{ij})$ is the tensor product space of $P^k(I_i)$ and $P^k(J_j)$.   Hence the DDG scheme can be formulated 
as follows: to find $u_h\in V_h$ such that for all $v\in V_h$, 
\begin{align}\label{EFfully2D-}
\int_{K_{ij}} Mu_{h}^{n+1}v\,dxdy 
= &\int_{K_{ij}} Mu_{h}^{n}v\,dxdy 
- \tau \int_{K_{ij}} A\nabla u_h^n\cdot\nabla v\,dxdy \notag \\
 &+ \tau \int_{\partial K_{ij}}A\widehat{\nabla u_h^n}\cdot \nu v\,ds 
 + \tau \int_{\partial K_{ij}}A{\nabla v}\cdot \nu (u_h^n - \{u_h^n\})\,ds, 
\end{align}
where $\nu$ is the outward unit normal to the cell boundary $\partial K_{ij}$, and the numerical flux 
\begin{equation}\label{df2}
\widehat{\nabla u_h}= (\widehat{ \partial_x u_h}, \widehat{\partial_y u_h})^\top
\end{equation}
is defined as follows, 
\begin{align}\label{beta2D}
\left.\widehat{\partial_x u_h}\right|_{(x_{i+\frac12},y)}& =\frac{\beta_{0}}{\Delta x}[u_h] +\{\partial_x u_h\} +\beta_{1} \Delta x [\partial_x^2 u_h], \;  
\left.\widehat{\partial_y u_h}\right|_{(x_{i+\frac12},y)}  =\{\partial_y u_h\}, \notag\\
\left.\widehat{\partial_y u_h}\right|_{(x,y_{j+\frac12})}& =\frac{\beta_{0}}{\Delta y}[u_h] +\{\partial_y u_h\} +\beta_{1} \Delta y [\partial_y^2 u_h], \;
\left.\widehat{\partial_x u_h}\right|_{(x,y_{j+\frac12})} =\{\partial_x u_h\}, \notag
\end{align}
where $\beta_0, \beta_1$ are flux parameters to be determined to ensure the desired MPS property.  
Note that in $\widehat{\partial_y u_h}|_{(x_{i+\frac12},y)}$, jump terms do not show up since along interface 
 $x = x_{i+\frac12}$ and $y \in [y_{j-\frac12}, y_{j+\frac12}]$, there is no jump of polynomials  in $y$ direction. This argument applies to $\partial_x u_h|_{(x,y_{j+\frac12})}$ as well.
Here for a concise expression of the numerical flux, a uniform mesh has been used, with $\Delta x = x_{i+\frac12}-x_{i-\frac12}$ and $\Delta y=y_{j+\frac12}-y_{j-\frac12}$.  

To proceed, we  recall some conventions similar to the one-dimensional case.  The weighted cell average is defined as 
\begin{equation*}
\langle u^{n+1}\rangle_{ij} = \frac{\int_{K_{ij}}Mu_h\,dxdy}{\Delta x \Delta y}
	= \dashint_{J_j} \dashint_{I_i}Mu_h\,dxdy,
\end{equation*}
where $\dashint$ denotes the average integral.  We also define the weighted interval average in $x$ and $y$, respectively,
$$
\langle \phi(\xi)  \rangle_i(y)=\dashint_{-1}^1M\left(x_i+\frac{\Delta x}{2}\xi, y\right)\phi(\xi)d\xi, \quad
\langle \phi(\eta) \rangle_j(x)=\dashint_{-1}^1M\left(x, y_j+\frac{\Delta y}{2}\eta\right)\phi(\eta)d\eta.
$$
The cell average 
$$
\overline{u}_{ij} = \frac{\int_{K_{ij}}Mu_h\,dxdy}{\int_{K_{ij}}M\,dxdy}=\frac{\langle u_h\rangle_{ij}}{\langle 1\rangle_{ij}}
$$
update can be obtained from  (\ref{EFfully2D-}) as
\begin{align}\label{EF2Dv1}
\langle u^{n+1}_h\rangle_{ij} =& \langle u^{n}\rangle_{ij} 
+ \mu_x\Delta x \left.\dashint_{J_j}\big(a\widehat{\partial_x u_h^n} + c\widehat{\partial_y u_h^n}\big)\,dy\right|_{\partial I_i}
+ \mu_y\Delta y \left.\dashint_{I_i}\big(b\widehat{\partial_y u_h^n} + c\widehat{\partial_x u_h^n}\big)\,dx\right|_{\partial J_j},
\end{align}
where $\mu_{x} = \frac{\tau }{(\Delta x)^2}$ and $\mu_{y} = \frac{\tau}{(\Delta y)^2}$. 
Let $\mu = \mu_x + \mu_y$ and decompose $\langle u^{n}\rangle_{ij}$ as
\[
\langle u^{n}\rangle_{ij} = \frac{\mu_x}{\mu} \langle u^{n}\rangle_{ij}  + \frac{\mu_y}{\mu} \langle u^{n}\rangle_{ij},
\] 
so that (\ref{EF2Dv1}) can be rewritten as 
\begin{align}\label{EF2DH}
\langle u^{n+1}\rangle_{ij} =& \frac{\mu_x}{\mu}\dashint_{J_j}H_1(y)\,dy + \frac{\mu_y}{\mu}\dashint_{I_i} H_2(x)\,dx  +B,
\end{align}
where 
\begin{align*}
& H_1(y)  =\dashint_{I_i}M(x,y)u_{h}^n\,dx
	+ \mu\Delta x\left. a\widehat{\partial_x u_h^n}\right|_{\partial I_i},\\
& H_2(x)  =\dashint_{J_j}M(x,y)u_{h}^n\,dy
	+ \mu\Delta y\left. b\widehat{\partial_y u_h^n} \right|_{\partial J_j},\\
& B =  \frac{c\tau}{|K_{ij}|} \left[ \int_{J_j}\{\partial_y u_h\}dy |_{\partial I_i} +\int_{I_i}\{\partial_x u_h\}dx |_{\partial J_j} \right].
\end{align*}
Notice that $B$ can be expressed as a combination of point values of $u_h^n$ at four vertices of $K_{ij}$ as 
\begin{align*}
B& =\frac{c\tau}{2\Delta x\Delta y}\big(2u_h^n(x_{i+\frac12}^-, y_{j+\frac12}^-) - 2u_h^n(x_{i+\frac12}^-, y_{j-\frac12}^+)
-2u_h^n(x_{i-\frac12}^+, y_{j+\frac12}^-) + 2u_h^n(x_{i-\frac12}^+, y_{j-\frac12}^+)\big) \\
& + \frac{c\tau}{2\Delta x\Delta y}\big(u_h^n(x_{i+\frac12}^+, y_{j+\frac12}^-) +u_h^n(x_{i+\frac12}^-, y_{j+\frac12}^+)  - u_h^n(x_{i+\frac12}^-, y_{j-\frac12}^-)
-u_h^n(x_{i+\frac12}^+, y_{j-\frac12}^+)\big)\\
& + \frac{c\tau}{2\Delta x\Delta y}\big(u_h^n(x_{i-\frac12}^+, y_{j-\frac12}^-)+u_h^n(x_{i-\frac12}^-, y_{j-\frac12}^+) -u_h^n(x_{i-\frac12}^+, y_{j+\frac12}^+) 
-u_h^n(x_{i-\frac12}^-, y_{j+\frac12}^-) \big).
\end{align*}
The two integrals in (\ref{EF2DH}) can be approximated by the Gauss-Lobatto quadrature rule with sufficient accuracy. 
Let us assume that we use an $L-$point Gauss-Lobatto quadrature rule with $L\geq \frac{k+3}{2}$ points, which has accuracy of at least $O(h^{k+2})$.
Let
\begin{align*}
&\hat S_{i}^x = \{x_{i-\frac12} = \hat x^1_i < \cdots < \hat x_i^{\sigma} < \cdots < \hat x^L_i=x_{i+\frac12}\}, \notag\\
&\hat S_{j}^y = \{y_{j-\frac12} = \hat y^1_j < \cdots < \hat y_j^{\sigma} < \cdots < \hat y^L_j=y_{j+\frac12}\}
\end{align*}
denote the quadrature points on $I_i$ and $J_j$, respectively, 
and $\hat\omega^{\sigma}$'s be the associated quadrature weights so that
$$
\sum_{\sigma=1}^L \hat \omega^{\sigma}=1.
$$
Using the quadrature rule on the right-hand side of (\ref{EF2DH}), we obtain the following scheme
\begin{equation}\label{c5EF2DHQ}
\langle u^{n+1}_h \rangle_{ij}= \frac{\mu_x}{\mu}\sum_{\sigma=1}^L \hat\omega^{\sigma}H_1(\hat y_j^\sigma) + \frac{\mu_y}{\mu}\sum_{\sigma=1}^L \hat\omega^{\sigma}H_2(\hat x_i^\sigma)+B. 
\end{equation}
Also set 
\begin{align*} 
 S^x_i & = \{x_{i-\frac12}, x_i^{\gamma}, x_{i+\frac12}\} = x_i + \frac{\Delta x}{2}\{-1, \gamma^x, 1\}, \\
 S^y_j & = \{y_{j-\frac12}, y_j^{\gamma}, y_{j+\frac12}\} = y_j + \frac{\Delta y}{2}\{-1, \gamma^y, 1\}
\end{align*}
 to denote the test sets on $[x_{i-\frac12}, x_{i+\frac12}]$ and $[y_{j-\frac12}, y_{j+\frac12}]$, respectively,  with $\gamma^x, \gamma^y$ satisfying
\begin{align*} \label{c5angle2D}
& \frac{\langle \xi-\xi^2\rangle_i}{\langle 1- \xi \rangle_i} (y_j^\sigma) < \gamma^x < \frac{\langle \xi +\xi^2\rangle_i}{\langle 1+\xi \rangle_i} (y_j^\sigma), \quad |\gamma^x|\leq 8\beta_1-1,\\
&  \frac{\langle \eta-\eta^2\rangle_j} {\langle 1- \eta \rangle_j} (x_i^\sigma) < \gamma^y < \frac{\langle \eta +\eta^2\rangle_j}{\langle 1+\eta \rangle_j} (x_i^\sigma), \quad  |\gamma^y|\leq 8\beta_1-1.\nonumber
\end{align*}
We use $\otimes$ to denote the tensor product and define
\begin{equation*}\label{c5set}
S_{ij}=(S_i^x\otimes\hat S^y_j)\cup (\hat S_i^x \otimes S_j^y).
\end{equation*}
The main result can now be stated in the following.
\begin{theorem}\label{th2dk2}($k=2$)
Consider the two dimensional DDG scheme (\ref{EFfully2D-}) on rectangular meshes.
Assume the mesh is regularly shaped, i.e., $ \kappa^{-1} \leq\frac{\Delta x}{\Delta y} \leq \kappa 
$,  for some constant $\kappa > 0$, and the flux parameters $(\beta_0, \beta_1)$  satisfy 
\begin{equation}\label{beta2dp}
\beta_0 \geq 1+\frac{\kappa|c|}{2 \hat \omega^1 \min\{a, b\}}.
\quad\text{ and } \quad 
\frac{1}{8} \leq \beta_1 \leq \frac{1}{4} 
\end{equation}
If $u_h^n(x, y)\in [c_1, c_2]$ for all $(x, y)\in S_{ij}$, then there exists $\mu_0>0$ such that if $\mu \leq \mu_0$ the cell average
$
\bar u^{n+1}_{ij}  \in [c_1, c_2]. 
$ 
More precisely, we have 
\begin{align}\label{CFL2d-}
 \mu_0 = \min_{i,j}\underline\omega_{ij}
	\min\left\{\frac{\hat\omega^1}{\hat\omega^1\max\{a,b\}\big(\beta_0 + \frac{8\beta_1-2}{1+\gamma}\big) + \kappa|c|},
	\frac{1-\gamma^2}{4\max\{a,b\}(1-4\beta_1)}\right\},
\end{align}
where $\underline\omega_{ij}>0$ is defined in equation \eqref{min_omega}, and $\gamma = \max\{|\gamma^x|, |\gamma^y|\} \leq 8\beta_1-1$.
\end{theorem}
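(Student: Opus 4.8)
The plan is to follow the same three-step architecture used in the one-dimensional Theorem~\ref{thk2max}, now applied to the representation~\eqref{c5EF2DHQ} of $\langle u^{n+1}_h\rangle_{ij}$ as a convex combination (with weights $\mu_x/\mu$ and $\mu_y/\mu$) of the Gauss--Lobatto sums of $H_1$ and $H_2$, plus the cross term $B$. First I would treat $H_1(\hat y^\sigma_j)$: for each fixed Gauss--Lobatto node $\hat y^\sigma_j$ the function $x\mapsto u_h^n(x,\hat y^\sigma_j)$ restricts to a polynomial in $P^2(I_i)$, so I can apply the one-dimensional weighted interpolation identity~\eqref{c5wp}--\eqref{c5fluxk2gamma} in the $x$-variable with angle $\gamma^x$ and weight $M(x,\hat y^\sigma_j)$. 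This writes $H_1(\hat y^\sigma_j)$ as a nonnegative combination of the point values $u_h^n$ at the nodes of $S^x_i\otimes\{\hat y^\sigma_j\}$ together with the interface terms $\{\partial_y u_h^n\}$ that contribute (after integration against the quadrature) to the $a\widehat{\partial_x u_h^n}$ flux — wait, more precisely the $a$-part of $H_1$ is handled exactly as in Step~2 of Theorem~\ref{thk2max} with $A$ replaced by $a$, producing the $\alpha_i(\pm\gamma^x)$ coefficients. Symmetrically for $H_2$ in the $y$-variable with $\gamma^y$ and coefficient $b$.

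The new feature is the off-diagonal term $B$, which the excerpt has already expanded as an explicit linear combination of the twelve corner/edge point values of $u_h^n$ on $\partial K_{ij}$ (and its neighbors), each carrying a coefficient $\pm c\tau/(2\Delta x\Delta y)$. The key step is an \emph{absorption} argument: the negative-magnitude contributions of $B$ must be dominated by the slack available in the diagonal coefficients of $H_1$ and $H_2$. The diagonal terms in~\eqref{c5avedeck2}-type expansions come with a factor $\beta_0$ through $\alpha_3$, and the extra $\kappa|c|/(2\hat\omega^1\min\{a,b\})$ in the lower bound~\eqref{beta2dp} for $\beta_0$ is precisely what is needed so that, after accounting for $|c|\tau/(\Delta x\Delta y) = |c|\mu_x\Delta x/(\Delta y)\le \kappa|c|\mu$ and the smallest Gauss--Lobatto weight $\hat\omega^1$, every point-value coefficient in the fully expanded $\langle u^{n+1}_h\rangle_{ij}$ stays nonnegative. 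The CFL bound~\eqref{CFL2d-} is then the largest $\mu$ for which this nonnegativity holds uniformly in $i,j$: the first term in the $\min$ controls the corner coefficients (balancing $\beta_0$-weighted diagonal slack against $\kappa|c|$), and the second controls the interior node coefficient $\tilde\omega^2$-type term against $\alpha_2$, exactly as in~\eqref{CFL1Dk2}. The quantity $\underline\omega_{ij}$ from~\eqref{min_omega} is the minimum over the relevant one-dimensional weighted interpolation weights $\tilde\omega^i$ evaluated at all the quadrature nodes, which is positive by Lemma~2.1 since each $\gamma^x,\gamma^y$ lies in the corresponding admissible interval.

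Once all coefficients are nonnegative and $\mu\le\mu_0$, the map from the point values on $S_{ij}$ to $\langle u^{n+1}_h\rangle_{ij}$ is monotone; replacing every point value by $c_1$ (resp.\ $c_2$) and using that all flux-related coefficients ($\alpha_i$ and the $B$-coefficients) sum to zero by the telescoping/consistency structure, while the genuine convex weights sum to $\langle 1\rangle_{ij}$, yields $c_1\langle 1\rangle_{ij}\le\langle u^{n+1}_h\rangle_{ij}\le c_2\langle 1\rangle_{ij}$, hence $\bar u^{n+1}_{ij}\in[c_1,c_2]$.

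I expect the main obstacle to be the bookkeeping in the absorption step: one must simultaneously (i) split $B$ so that each of its twelve point values is matched against a specific diagonal or corner coefficient coming from $H_1$ or $H_2$ (some corner values of $K_{ij}$ appear in the $S^x_i\otimes\hat S^y_j$ expansion, others only via neighboring cells), and (ii) verify that the \emph{worst-case} sign pattern of $B$ — the one making the most coefficients negative — is still dominated. This is where the factor $\kappa$ (mesh aspect ratio) and $\hat\omega^1$ (smallest quadrature weight) enter, and getting the constants to line up with~\eqref{beta2dp} and~\eqref{CFL2d-} is the delicate part; the rest is a direct two-dimensional transcription of the one-dimensional argument.
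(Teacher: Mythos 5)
Your proposal follows essentially the same route as the paper's Appendix A: decompose $\langle u_h^{n+1}\rangle_{ij}$ via the Gauss--Lobatto quadrature into the $H_1$, $H_2$ sums plus $B$, apply the one-dimensional representation \eqref{c5avedeck2} to each $H_1(\hat y_j^\sigma)$ and $H_2(\hat x_i^\sigma)$, and absorb the twelve point-value contributions of $B$ into the boundary-node groups $H_1(y_{j\pm\frac12})$ and $H_2(x_{i\pm\frac12})$ (scaled by $\mu/(\mu_x\hat\omega_1)$, resp.\ $\mu/(\mu_y\hat\omega_1)$), with the enlarged $\beta_0$ in \eqref{beta2dp} supplying exactly the slack in $\alpha_3(\pm\gamma)$ needed to keep every coefficient nonnegative, and the two terms in \eqref{CFL2d-} arising from the corner and interior-node coefficients respectively. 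The bookkeeping you flag as delicate is carried out explicitly in the paper as the grouping $B=B_1+B_2+B_3+B_4$, but your identification of all the constants and of $\underline\omega_{ij}$ matches the paper's argument.
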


The proof is relegated to Appendix A.  We proceed to deal with nonlinear diffusion equations in the next  subsection.

\subsection{Application to nonlinear diffusion equations} 
\label{ssec2DNoninearCD}
This section is devoted to application to nonlinear diffusion equations of the form 
$$
\partial_t u = \nabla\cdot(A\nabla u) \text{ with } 
A(x,y,u) = \left(\begin{array}{cc} a & c \\ c & b \end{array}\right) \quad \text{ for } (x, y)\in \Omega \subset \mathbb{R}^2, \quad t>0,
$$
subject to initial data $u_0(x, y)$ and periodic boundary conditions, and $A(x,y,u)$ is nonnegative definite. 
This type of model arises in a wide range of applications. 

Hence the DDG scheme can be formulated as follows: to find $u_h\in V_h$ such that for all $v\in V_h$, 
\begin{align}\label{EFfully2D}
\int_{K_{ij}} u_{h}^{n+1}v\,dxdy 
= &\int_{K_{ij}} u_{h}^{n}v\,dxdy 
- \tau \int_{K_{ij}} A_h^n \nabla u_h^n\cdot\nabla v\,dxdy \notag \\
 &+ \tau \int_{\partial K_{ij}} \{A_h^n\}\widehat{\nabla u_h^n}\cdot \nu v\,ds 
 + \tau \int_{\partial K_{ij}}\{A_h^n\} {\nabla v}\cdot \nu (u_h^n - \{u_h^n\})\,ds, 
\end{align}
where $\nu$ is the outward unit normal to the cell boundary $\partial K_{ij}$, the numerical flux 
$
\widehat{\nabla u_h}
$
is defined in (\ref{df2}), and $A_h^n=A(x, y, u_h^n)$. 

The  cell average evolves according to 
$$
\bar u^{n+1}_{ij} = \bar u^{n}_{ij} 
+ \mu_x\Delta x \left.\dashint_{J_j}\big(\{a_h^n\}\widehat{\partial_x u_h^n} + \{c_h^n\}\widehat{\partial_y u_h^n}\big)\,dy\right|_{\partial I_i}
+ \mu_y\Delta y \left.\dashint_{I_i}\big(\{b_h^n\}\widehat{\partial_y u_h^n} + \{c_h^n\}\widehat{\partial_x u_h^n}\big)\,dx\right|_{\partial J_j}.
$$
That is 
\begin{align}\label{EF2Dv1+}
\bar u^{n+1}_{ij}  =  \frac{\mu_x}{\mu}\dashint_{J_j}H_1(y)\,dy + \frac{\mu_y}{\mu}\dashint_{I_i} H_2(x)\,dx  +B, 
\end{align}
where  $\mu_{x} = \frac{\tau }{(\Delta x)^2}$,  $\mu_{y} = \frac{\tau}{(\Delta y)^2}$ and $\mu = \mu_x + \mu_y$,  with 
\begin{align*}
& H_1(y)  =\dashint_{I_i}u_{h}^n\,dx
	+ \mu\Delta x\left. \{a_h^n\}\widehat{\partial_x u_h^n}\right|_{\partial I_i},\\
& H_2(x)  =\dashint_{J_j}u_{h}^n\,dy
	+ \mu\Delta y\left. \{b_h^n\}\widehat{\partial_y u_h^n} \right|_{\partial J_j},\\
& B =  \frac{\tau}{|K_{ij}|} \left[ \int_{J_j}\{c_h^n\} \{\partial_y u_h\}dy |_{\partial I_i} +\int_{I_i}\{c_h^n\}\{\partial_x u_h\}dx |_{\partial J_j} \right].
\end{align*}

The main result  can be stated in the following.
\begin{theorem}\label{th2dk2+}($k=2$) Consider the two dimensional DDG scheme (\ref{EFfully2D}) on rectangular meshes.
Assume the mesh is regularly shaped, i.e., $ \kappa^{-1} \leq\frac{\Delta x}{\Delta y} \leq \kappa 
$,  for some constant $\kappa > 0$, and the flux parameters $(\beta_0, \beta_1)$  satisfy 
\begin{equation}\label{beta2d}
\beta_0 \geq 1+\frac{2\kappa \|c\|_\infty L(L-1) }{(1-\gamma) \min\{a, b\}},
\quad\text{ and } \quad 
\frac{1}{8} \leq \beta_1 \leq \frac{1}{4} 
\end{equation}
and $\gamma = \max\{|\gamma^x|, |\gamma^y|\} \leq 8\beta_1-1$,
where
\[
\|c\|_\infty = \max_{(x,y) \in\Omega, u\in[c_1, c_2]}|c(x,y,u)|, \quad \min\{a, b\} = \min_{(x,y) \in \Omega, u\in[c_1, c_2]}\big\{a(x,y,u), b(x,y,u)\big\}.
\]
If $u_h^n(x, y)\in [c_1, c_2]$ for all $(x, y)\in S_{ij}$, then there exists $\mu_0>0$ such that if $\mu \leq \mu_0$ the cell average
$
\bar u^{n+1}_{ij}  \in [c_1, c_2]. 
$ 
More precisely, we have 
\begin{align}\label{CFL2d}
 \mu_0 = \min\left\{\frac{1-3\gamma}{6\max\{a,b\}\big(\beta_0 + \frac{8\beta_1-2}{1+\gamma}\big)(1-\gamma) + 12\kappa\|c\|_\infty L(L-1)},
	\frac{1}{6\max\{a,b\}(1-4\beta_1)}\right\},
\end{align}
where 
\[
\max\{a, b\} = \max_{(x,y) \in\Omega, u\in[c_1, c_2]}\big\{a(x,y,u), b(x,y,u)\big\}.
\]
\end{theorem}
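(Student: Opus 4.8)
The plan is to mirror the structure of the proof of Theorem \ref{th2dk2} (in Appendix A) and the proof of Theorem \ref{thk2_1D}, with the main new ingredient being that the diffusion coefficients $a, b, c$ are now $u_h^n$-dependent and hence spatially varying within each cell, so that the cross term $B$ must be absorbed carefully using $L$-point Gauss–Lobatto quadrature rather than a single interface evaluation. Starting from \eqref{EF2Dv1+}, I would apply the Gauss–Lobatto quadrature rule with $L \geq \frac{k+3}{2}$ points to the two average integrals, obtaining an identity of the form $\bar u_{ij}^{n+1} = \frac{\mu_x}{\mu}\sum_\sigma \hat\omega^\sigma H_1(\hat y_j^\sigma) + \frac{\mu_y}{\mu}\sum_\sigma \hat\omega^\sigma H_2(\hat x_i^\sigma) + B$, and then expand $B$ in the four-vertex form displayed before Theorem \ref{th2dk2}, but now with the point values of $\{c_h^n\}$ appearing as weights. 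Since the vertices of $K_{ij}$ lie in $\hat S_i^x \otimes \hat S_j^y$ and the quadrature endpoints coincide with cell interfaces, every point value appearing in $B$ is either in $S_i^x \otimes \hat S_j^y$ or $\hat S_i^x \otimes S_j^y$, hence in $S_{ij}$.

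Next I would carry out, for each fixed quadrature node $\hat y_j^\sigma$ (resp. $\hat x_i^\sigma$), the one-dimensional decomposition \eqref{pj}–\eqref{c5wp} of $\dashint_{I_i} u_h^n\,dx$ at the three points $\{-1, \gamma^x, 1\}$ with weights $\omega^1, \omega^2, \omega^3$ (here $M \equiv 1$ so these are the unweighted coefficients from Step 2 of Theorem \ref{thk2_1D}), and the flux representation \eqref{c5fluxk2gamma} for $\Delta x\,\widehat{\partial_x u_h^n}$, and likewise in $y$. Because $\{a_h^n\}, \{b_h^n\}, \{c_h^n\}$ are just numbers (averages of point values of $A$), bounded by $\max\{a,b\}$ and $\|c\|_\infty$, one re-derives a representation of $\bar u_{ij}^{n+1}$ as a linear combination of point values $p(\pm1), p(\gamma)$ at nodes in $S_{ij}$; collecting the coefficient of each such point value, the "diffusion diagonal" part contributes nonnegatively provided $\mu$ is below the CFL bound coming from the $\alpha_i$ weights, exactly as in \eqref{CFL1Dk2}, while the off-diagonal $B$-contribution produces terms bounded in absolute value by $C\mu\|c\|_\infty L(L-1)/\Delta x\Delta y$ times point values; the regular-mesh hypothesis $\kappa^{-1}\le \Delta x/\Delta y\le\kappa$ converts these into $\mu$-weighted contributions of comparable size. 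The role of the enlarged $\beta_0$ in \eqref{beta2d}, namely $\beta_0 \geq 1 + \frac{2\kappa\|c\|_\infty L(L-1)}{(1-\gamma)\min\{a,b\}}$, is precisely to guarantee that after adding the negative off-diagonal contributions, the coefficients of the "outer" point values $p_{i\pm1}(\cdot), p_{j\pm1}(\cdot)$ (which are multiplied by $\alpha_3$-type terms scaled by the diffusion coefficient) remain nonnegative; here one uses that $\alpha_3(\gamma)$ contains the free parameter $\beta_0$ additively, so increasing $\beta_0$ dominates the extra $\|c\|_\infty$ terms.

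Having established that, under $\mu \leq \mu_0$ with $\mu_0$ as in \eqref{CFL2d}, $\bar u_{ij}^{n+1}$ is a convex-combination-type expression — i.e. nondecreasing in every point value from $S_{ij}$ and with coefficients summing to $1$ after the $\alpha_i$ and $B$ terms cancel (the cancellation being the same telescoping $\sum_i \alpha_i(\gamma) = \beta_0 = \sum_i\alpha_i(-\gamma)$ identity as in Theorem \ref{thk2max}, together with the fact that $B$ is a difference of equally-weighted vertex values and so sums to zero) — I would substitute $c_1$ and $c_2$ for the point values to conclude $c_1 \leq \bar u_{ij}^{n+1}\leq c_2$. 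The main obstacle, and the only genuinely new estimate, is the bookkeeping in the previous paragraph: verifying that the worst-case accumulation of the $\binom{L}{2}$-many Gauss–Lobatto cross contributions in $B$, after using the mesh-regularity ratio, is controlled by the single additive $\beta_0$-enhancement — in particular pinning down the constant $2\kappa\|c\|_\infty L(L-1)/(1-\gamma)$ and confirming it is uniform in $i,j$ and in the choice of $\gamma^x, \gamma^y$ (whence the appearance of $\gamma = \max\{|\gamma^x|,|\gamma^y|\}$ and the factor $1-\gamma$ rather than $1-\gamma^2$). Everything else is a direct transcription of the $M$-weighted arguments already given, specialized to $M \equiv 1$.
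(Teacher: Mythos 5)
Your overall architecture (quadrature decomposition of the two average integrals, one-dimensional weighted decomposition at each node, absorption of the cross term $B$ into the endpoint groups, then monotonicity plus the telescoping identity $\sum_i\alpha_i(\pm\gamma)=\beta_0$) matches the paper's Appendix~B, but there is a genuine gap at the single point where this theorem actually differs from Theorem~\ref{th2dk2}: your treatment of $B$. You propose to ``expand $B$ in the four-vertex form displayed before Theorem~\ref{th2dk2}, but now with the point values of $\{c_h^n\}$ appearing as weights.'' That four-vertex representation is a consequence of $c$ being \emph{constant}: only then does $\int_{J_j}c\,\{\partial_y u_h\}\,dy$ telescope to $c\bigl(\{u_h\}(y_{j+\frac12}^-)-\{u_h\}(y_{j-\frac12}^+)\bigr)$. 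When $c=c(x,y,u)$ varies along the edge, one must represent the degree-one polynomial $\partial_y u_h$ through the derivative of the three-point Lagrange interpolation, $\dot p(\eta)=\dot\omega^1(\eta)p(-1)+\dot\omega^2(\eta)p(\gamma)+\dot\omega^3(\eta)p(1)$, and the coefficient $\int_{-1}^1 c\,\dot\omega^2(\eta)\,d\eta$ of the \emph{interior} value $u_h(x,y_j^\gamma)$ is nonzero in general. So $B$ unavoidably contains signed contributions at the mid test points $y_j^\gamma$ and $x_i^\gamma$, not just at the four vertices, and your claim that all values in $B$ lie at vertices (hence in $S_i^x\otimes\hat S_j^y$ automatically) fails.

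This is not merely a bookkeeping issue: the negative coefficient attached to $u_h(\cdot,y_j^\gamma)$ has no positive $\tilde\omega$-type coefficient to absorb it unless $y_j^\gamma$ coincides with one of the Gauss--Lobatto nodes, so that the corresponding piece of $B$ can be grouped with $\hat\omega^* H_1(y_j^\gamma)$. That is why the paper's proof requires $L\geq \frac{2k+3}{2}$ (not your $L\geq\frac{k+3}{2}$) together with the compatibility condition $x_i^\gamma\in\hat S_i^x$, $y_j^\gamma\in\hat S_j^y$, and performs a \emph{three}-way grouping of $B$ with $H_1(y_{j-\frac12})$, $H_1(y_j^\gamma)$, $H_1(y_{j+\frac12})$ (and likewise in $x$). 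The factor $\frac{1}{1-\gamma}$ in \eqref{beta2d} then arises from the uniform bound $\|c\|_\infty\bigl|\int_{-1}^1\dot\omega^\sigma(\eta)\,d\eta\bigr|\leq \frac{4\|c\|_\infty}{1-\gamma}$ on the Lagrange-derivative weights, not from counting $\binom{L}{2}$ cross contributions as you suggest. To repair your argument you would need to (i) replace the four-vertex expansion by the $\dot\omega^\sigma$ expansion, (ii) enlarge $L$ and constrain $\gamma$ so that the mid test points are quadrature nodes, and (iii) redo the nonnegativity conditions for the mid-point group; the remaining steps of your proposal then go through as written.
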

The proof is relegated to Appendix B. 

\section{Scaling limiter and the MPS algorithm}\label{secLimiter}
\subsection{Scaling limiter}  The one dimensional result in Theorem \ref{thk2max} and the two dimensional result  in Theorem \ref{th2dk2}
tell us that for the DDG scheme with forward Euler time discretization, we need to modify $u_h^n$ such that it is in $[c_1, c_2]$ on the test set $S=S_j$ or $S_{ij}$.  
In one dimensional case, we can use the following  scaling limiter
\begin{equation}\label{reconMax}
\tilde{u}_h(x) = \theta \left(u_h(x) - \bar{u}_j\right) + \bar{u}_j \quad 
\text{ with } \theta = \min\left\{1,  \left|\frac{\bar{u}- c_1}{\bar{u}_j - m_1}\right|,  
\left|\frac{c_2-\bar{u}_j}{m_2- \bar{u}_j}\right| \right\},
\end{equation}
where
\begin{equation}\label{zetaMax}
m_1 = \min_{x\in S_j}u_h(x), \qquad m_2 = \max_{x\in S_j}u_h(x).
\end{equation}
In the two dimensional case, 
\begin{equation}\label{recon2D}
\tilde{u}_h(x, y) = \theta \left(u_h(x, y)-\bar{u}_{ij}\right) + \bar{u}_{ij} \quad \text{ where } 
\theta = \min\left\{1,  \Big| \frac{\bar{u}_{ij}-c_1}{\bar{u}_{ij} - m_1} \Big|, \frac{c_2-\bar{u}_{ij}}{m_2-\bar{u}_{ij}}\Big| \right\}, 
\end{equation}
 where 
 \begin{equation}\label{c12}
m_1 = \min\limits_{(x, y)\in S_{ij}} u_h(x, y)\quad \text{and}\quad  m_2=\max\limits_{(x, y)\in S_{ij}} u_h(x, y).
\end{equation}
The modified polynomials  are indeed in $[c_1, c_2]$ and preserve the cell average.  
Moreover, following \cite{LY14} it can be shown that  the above scaling limiters do not destroy the accuracy. We summarize this for two dimensional case only.  
\begin{lemma} If $\bar{u}_{ij} \in (c_1, c_2)$,  then the modified polynomial (\ref{recon2D}) is as accurate as $u_h(x,y)$ to approximate $u(t,x,y)$ for the same $t$ 
in the following sense:
$$
 |u_h(x, y)-\tilde u_h(x, y)| \leq C_k \|u_h(\cdot, \cdot) - u(t, \cdot, \cdot)\|_{\infty},
$$
where $C_k$ is a constant depending on the polynomial degree $k$ and the weight function $M(x,y)$.
\end{lemma}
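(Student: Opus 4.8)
The plan is to follow the one-dimensional accuracy argument of \cite{LY14}, now with the two-dimensional test set $S_{ij}$. Put $\varepsilon := \|u_h(\cdot,\cdot)-u(t,\cdot,\cdot)\|_\infty$. From (\ref{recon2D}), $u_h(x,y)-\tilde u_h(x,y)=(1-\theta)\bigl(u_h(x,y)-\bar u_{ij}\bigr)$, so it suffices to bound $(1-\theta)\,\|u_h-\bar u_{ij}\|_{L^\infty(K_{ij})}$ by $C_k\varepsilon$. If $\theta=1$ there is nothing to prove, so I would suppose the active branch of the minimum in (\ref{recon2D}) is $\theta=(c_2-\bar u_{ij})/(m_2-\bar u_{ij})$ with $m_2>c_2$; the lower-bound branch is handled symmetrically. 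Writing $m_2=u_h(P)$ for some $P\in S_{ij}$ and using $u(t,P)\le c_2$,
\[
1-\theta=\frac{m_2-c_2}{m_2-\bar u_{ij}}\le\frac{u_h(P)-u(t,P)}{m_2-\bar u_{ij}}\le\frac{\varepsilon}{m_2-\bar u_{ij}},
\]
which is meaningful because the hypothesis $\bar u_{ij}\in(c_1,c_2)$ forces $m_2-\bar u_{ij}>0$.

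The argument then rests on two structural facts. (i) \emph{Convex representation of the cell average.} There exist weights $\lambda_P>0$ with $\sum_{P\in S_{ij}}\lambda_P=1$ and $\lambda_P\ge\lambda_0$ for some $\lambda_0>0$ depending only on $k$ and $M$, such that $\bar u_{ij}=\sum_{P\in S_{ij}}\lambda_P u_h(P)$. This is the two-dimensional analogue of the identity (\ref{c5wp}): it follows by applying the one-dimensional interpolation identity in $x$ and in $y$ together with the Gauss-Lobatto quadrature used to pass from (\ref{EF2DH}) to (\ref{c5EF2DHQ}), with $\lambda_0$ of the type of the quantity $\underline\omega_{ij}$ appearing in Theorem \ref{th2dk2}. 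Setting $m_1:=\min_{S_{ij}}u_h$, (i) yields
\[
m_2-\bar u_{ij}=\sum_{P\in S_{ij}}\lambda_P\bigl(m_2-u_h(P)\bigr)\ge\lambda_0(m_2-m_1),\qquad \bar u_{ij}-m_1\ge\lambda_0(m_2-m_1).
\]
(ii) \emph{Finite-dimensional norm equivalence.} The set $S_{ij}$ contains a $Q^k$-unisolvent tensor grid (for $k=2$, e.g.\ $S_i^x\otimes\hat S_j^y$, since $L\ge(k+3)/2$ gives enough distinct nodes and $\gamma^x,\gamma^y\ne\pm1$), so after rescaling $K_{ij}$ to the reference square one has $\|q\|_{L^\infty(K_{ij})}\le C_k\max_{P\in S_{ij}}|q(P)|$ for every $q\in Q^k(K_{ij})$, with $C_k$ independent of the cell and depending on $M$ only through a positive lower bound for $1-|\gamma^x|$ and $1-|\gamma^y|$.

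Combining the two, I would apply (ii) to $q=u_h-\bar u_{ij}$ and then (i):
\[
\|u_h-\bar u_{ij}\|_{L^\infty(K_{ij})}\le C_k\max\{m_2-\bar u_{ij},\ \bar u_{ij}-m_1\}\le C_k\lambda_0^{-1}(m_2-\bar u_{ij}),
\]
whence $|u_h(x,y)-\tilde u_h(x,y)|\le(1-\theta)\|u_h-\bar u_{ij}\|_{L^\infty(K_{ij})}\le C_k\lambda_0^{-1}\varepsilon$, which is the assertion after renaming the constant. The step I would expect to require the most care is fact (i): in one dimension (\ref{c5wp}) is immediate, but in two dimensions the non-factorizing weight $M(x,y)$ prevents a direct one-dimensional reduction, and one must splice the interpolation identities in the two directions with the Gauss-Lobatto rule while keeping the resulting weights positive and bounded below uniformly in the cell --- precisely the bookkeeping already carried out for Theorem \ref{th2dk2}, so in the proof it need only be invoked. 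I would also emphasize that the restriction $\bar u_{ij}\in(c_1,c_2)$ is essential: on $\partial[c_1,c_2]$ the relevant denominator may vanish, so the limiter is accuracy-preserving only on cells whose average is strictly interior, exactly as stated.
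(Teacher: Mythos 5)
Your argument is correct and is essentially the proof the paper intends: the paper gives no proof of this lemma in the text, deferring to \cite{LY14}, whose argument is exactly this Zhang--Shu-type combination of (a) the bound $1-\theta\le \varepsilon/(m_2-\bar u_{ij})$, (b) the convex representation of the weighted cell average over $S_{ij}$ with weights bounded below, and (c) finite-dimensional norm equivalence on $Q^k$ over a unisolvent subset of $S_{ij}$. The one step you rightly flag as delicate --- positivity and a uniform lower bound for the weights in the two-dimensional decomposition with non-factorizing $M(x,y)$ --- is precisely the bookkeeping already carried out in the proof of Theorem \ref{th2dk2}, so invoking it is legitimate.
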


\subsection{Algorithm}
The fact that we only require $u_h^n$ be in the desired range $[c_1, c_2]$ at certain points in $\cup_{i,j} S_{ij}$ can be used to reduce the computational cost in a great deal. 

Given the weighted $L^2$ projection $u_h^0$ computed from the initial data $u_0(x, y)$, the algorithm is stated below:
\begin{enumerate}
\item Initialization.

Obtain $u_h^0\in V_h$ using the standard piecewise $L^2$ projection 
$$
\int_{I_{ij}} u_0(x, y)\phi dx dy =\int_{I_{ij}} u_h^0\phi dx dy \quad {\rm for} \quad \phi \in V_h.
$$

\item Time evolution.\\
For $n = 0, 1, 2, ...$,
\begin{enumerate}
\item[(a)] Check the point values of $u_h^n$ on the test set $S_{ij}$. 
If one of them goes outside of $[c_1, c_2]$, reconstruct $\hat{u}_h^n$ using the formula (\ref{recon2D}) and (\ref{c12}) and set $u_h^n = \hat{u}_h^n$.

\item[(b)] Use the scheme (\ref{EFfully2D}) to compute $u_h^{n+1}$.
\end{enumerate}
End
\end{enumerate}

This algorithm is guaranteed to produce numerical solutions within the range with uniform third order accuracy for smooth exact solutions.

The algorithm with  forward Euler time discretization can be extended to high order ODE solves, such as the strong stability preserving Runge-Kutta methods, since they are a convex linear combination of the forward Euler; see \cite{ZS10}. 
The desired MPS property can be  ensured as long as the proper time step restrictions are respected.  

\section{Numerical tests}\label{secNum} 
In this section, we present the results of numerical tests using our third-order maximum-principle-preserving DDG schemes. 
The numerical integration is computed using the Gaussian quadrature rule. Since $M(x)$ and $A(x)$ can be complex functions, sufficient number of quadrature points will guarantee the desired order of accuracy and induce small numerical errors.  Hence we take $16$ quadrature points in each cell through all the examples.
As for the time stepping, we implement the SSP(3,3) scheme as in \cite{GKS09} for the strong-stability-preservation in time.
The one-dimensional error is measured by the discrete norms:
\begin{align*}
e^h_p(t) =\left(\sum_{j=1}^{N_x}\|u_h(t,\cdot) - u(t,\cdot)\|^p_{L^p(I_j)}\right)^{\frac1p}, \text{ for } p = 1 \text{ or } 2, 
\end{align*} 
and $u(t,x)$ is taken as the exact solution or the reference solution given by greatly refined spacial discretization. 
If neither of them is available, we can compute the consecutive errors between $u_h(t,x)$ and $u_{\frac h2}(t,x)$, where the subindex indicates the mesh size $h$ and $\frac h2$.
We introduce $e^h_\infty(t)$ to demonstrate the discrete MPS property at the test points in $S_j$: 
\begin{align}\label{einf}
e^h_\infty(t) = \max_{x \in S_j, 0 \leq j \leq N} \{c_1 - u_h(t, x), u_h(t, x) - c_2\},
\end{align}
If $e^h_\infty(t)>0$, then the discrete MPS property is violated.

\subsection{One dimensional numerical tests} 
In our numerical tests we choose scheme parameters as 
\[
\beta_0 = 2, \; \beta_1 = 0.16, \; \gamma = 0.1.
\]
\subsection*{Accuracy test}
We construct a linear problem of form \eqref{md} to demonstrate the third order accuracy of our numerical schemes:
\begin{equation}\label{Eg1D7}
\left\{\begin{array}{ll}
M(x)\partial_t u = \partial_x\big(A(x) \partial_xu\big), &\quad x \in [1, 3], \; t>0, \\
u(0, x)=\sin (x^2-1), &\quad x \in [1, 3]
\end{array}\right.
\end{equation}
with $M(x) = 4xe^{-x^2+1}, A(x) = \frac{e^{-x^2+1}}{x}$. The exact slution is given by 
$$
u(t,x) = \exp(-t)\sin(x^2-1-t).
$$
The boundary condition is imposed by using the exact solution.
We take the final time $t=0.1$ with different mesh sizes $N_x = 16, 32, 64$ and $128$. 
Fig. \ref{Eg1D7L2error} shows the logarithm of the $L^2$ error for $u_h$, denoted by circles.
One can observe the third order of accuracy for $u_h$.

\begin{figure}
\begin{minipage}{0.35\textwidth}
\includegraphics[width=\textwidth]{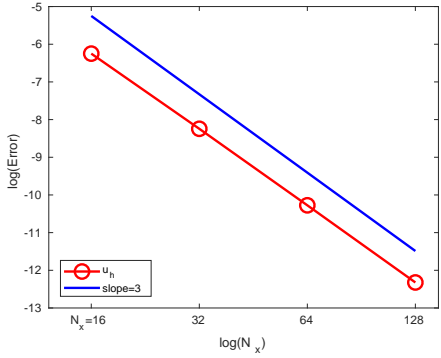}
\end{minipage}
\begin{minipage}{0.6\textwidth}
\caption{
The accuracy test on \eqref{Eg1D7}. 
The figure shows the logarithm of the $L^2$ error with different number of meshes for $u_h$, denoted by circles.
One can observe the third order of accuracy for $u_h$.}
\label{Eg1D7L2error}
\end{minipage}
\end{figure}

\subsection*{Porous medium equation} The porous medium equation
\begin{equation}\label{pm}
\partial_t u =\partial_x^2 (u^m), \quad m>1
\end{equation}
is known to admit the Barenblatt solution of the form 
\begin{equation*}
B_m(t, x)=t^{-\alpha}\left[ 1-\frac{\alpha(m-1)}{2m}\frac{|x|^2}{t^{2\alpha}} \right]_+^{\frac{1}{m-1}}, \quad \text{ with }\alpha = \frac{1}{m+1},
\end{equation*}
which is compactly supported.  
\begin{figure}
\begin{minipage}{0.35\textwidth}
\centering
\includegraphics[width=\textwidth]{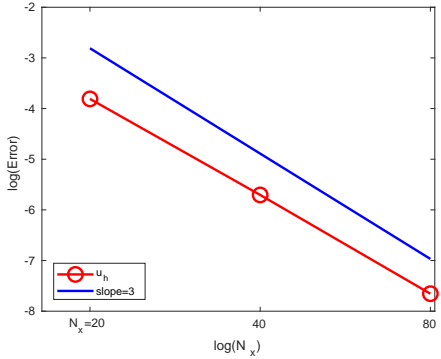}
\end{minipage}
\begin{minipage}{0.60\textwidth}
\caption{The accuracy test on \eqref{pm} with $m=5$ at $t = 0.1$. 
The error $e_p^h$ is computed by removing the two nonsmooth corners. 
The figure shows the logarithm of the $L^2$ error with different number of meshes for $u$, denoted by circles.
One can observe the third order of accuracy for $u_h$.}
\label{Eg1D2m5L2error}
\end{minipage}
\end{figure}
Fig. \ref{Eg1D2m5L2error} provides the accuracy test on \eqref{pm} with $m=5$ at $t = 0.1$.
Removing the two non-smooth corners, one can observe the third-order accuracy for the numerical solutions of $u_h$.

We compute the numerical solution with initial data $B_m(1,x)$ subject to zero boundary conditions for $m=2$, up to final time $t=3$. 
From the numerical results in Fig. \ref{Eg1D2}(a) with the MPS limiter we see a sharp resolution of discontinuities, and keeping the solution strictly within the initial bounds everywhere for all time.  
Fig. \ref{Eg1D2}(b) shows a zoom-in at the nonsmooth corner for $x \in [-6, -4]$ where the solution $u$ is well simulated.
In contrast, without MPS limiter, it brings in significant overshoots near the upper bound of the exact solution, as evidenced by oscillations already appearing at $t=1.0025$ in Fig. \ref{Eg1D2}(c).  In addition, the scheme without the MPS limiter will blow up in a short time.
\begin{figure}
\centering
\begin{subfigure}[b]{0.32\textwidth}
\includegraphics[width=\textwidth]{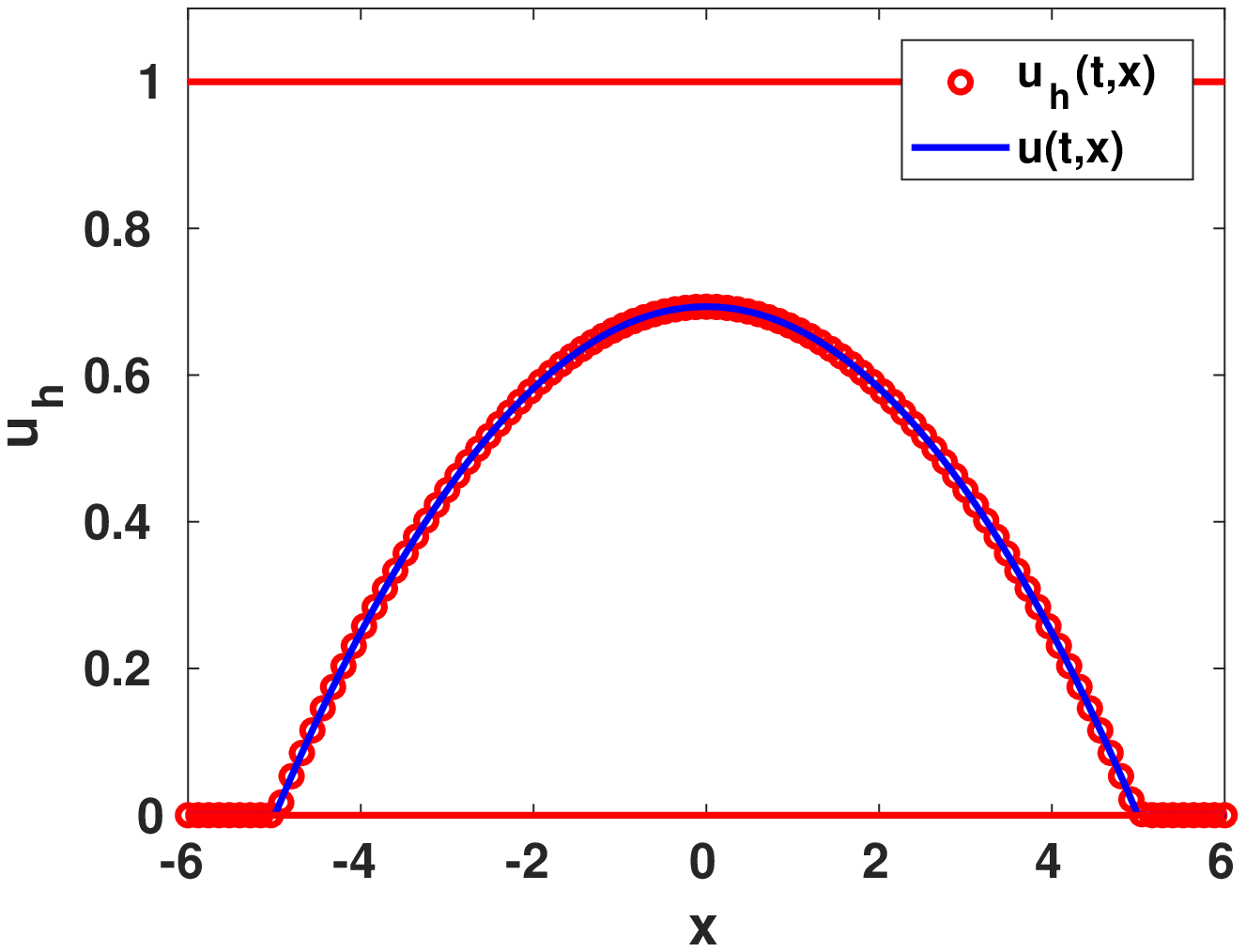}
\caption{$u_h(t=3,x)$.}
\label{Eg1D2uh_t3}
\end{subfigure}
\begin{subfigure}[b]{0.32\textwidth}
\includegraphics[width=\textwidth]{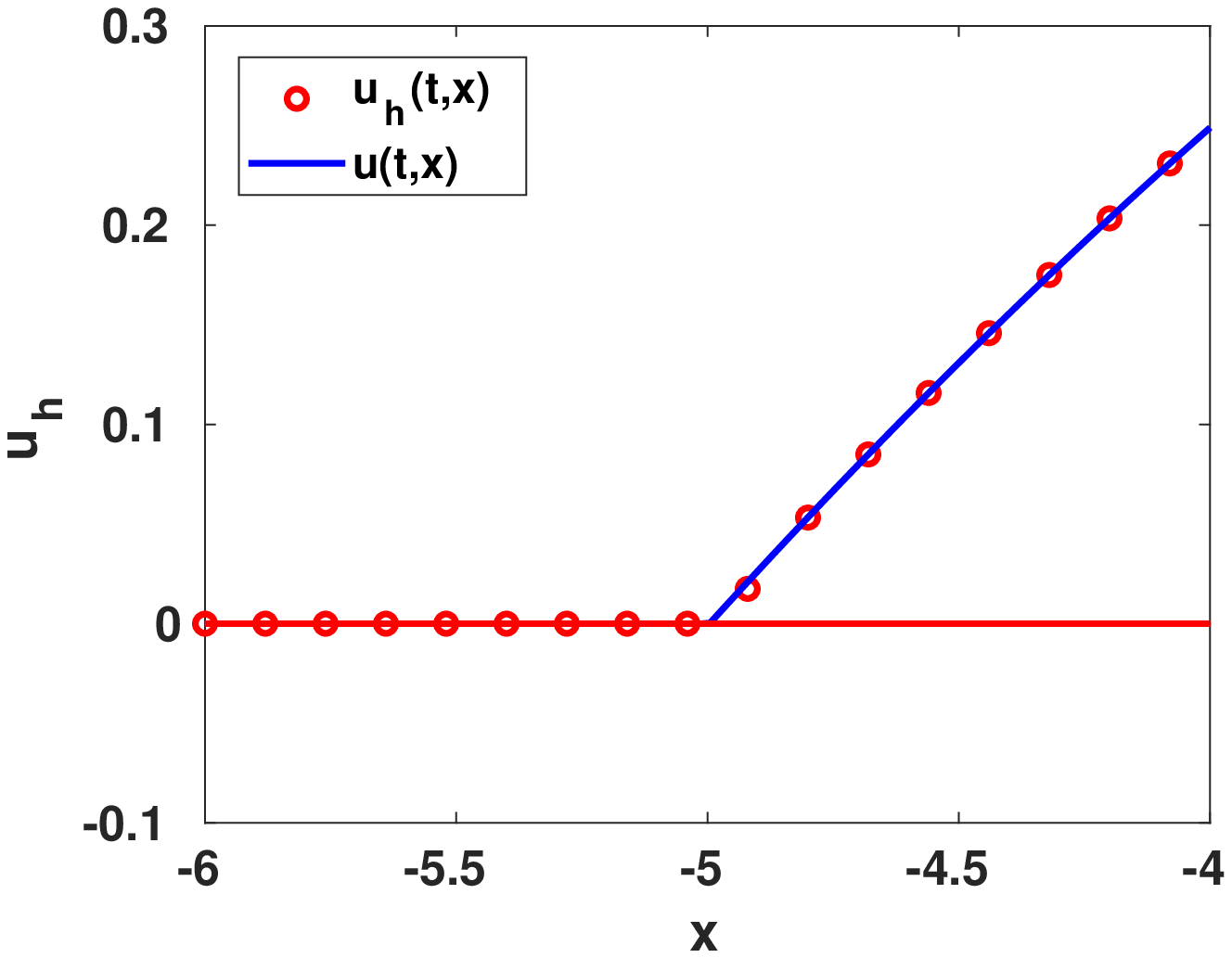}
\caption{Zoom-in of $u_h(t=3,x)$.}
\label{Eg1D2uh_t3}
\end{subfigure}
\begin{subfigure}[b]{0.32\textwidth}
\includegraphics[width=\textwidth]{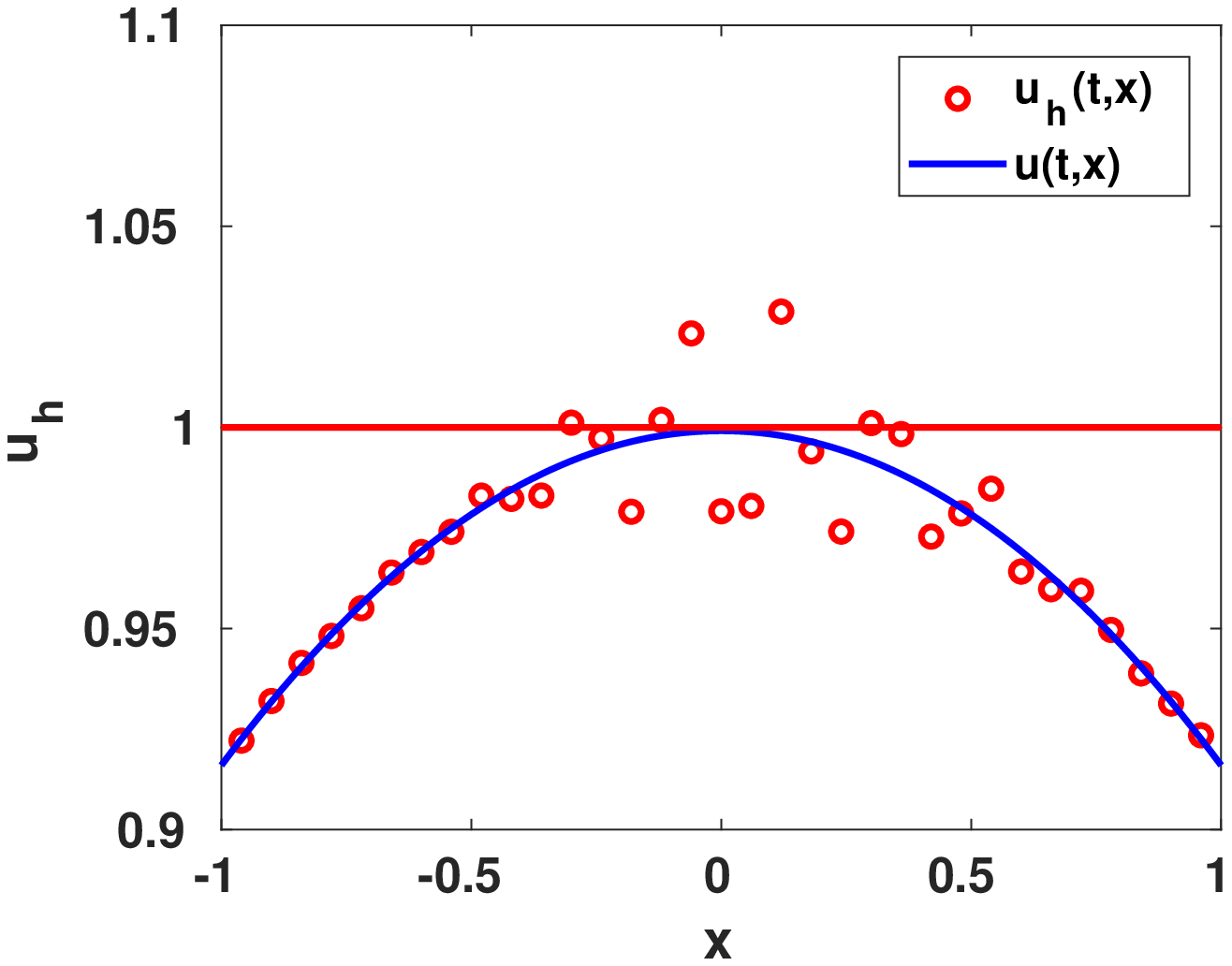}
\caption{$u_h(t=1.0025,x)$.}
\label{Eg1D2nolimiter}
\end{subfigure}
\caption{The numerical solution to \eqref{pm} with $m = 2$ and $N_x = 200, \Delta t = 0.0001$.
$\mu_0 \approx 3.66\times10^{-2}$.
Fig. \ref{Eg1D2uh_t3} shows $u_h(t=3,x)$ (circles) with the MPS limiter against the exact solution (solid lines).
Fig. \ref{Eg1D2nolimiter} shows the numerical solution without the MPS limiter at $t = 1.0025$, zoomed in  $[-1,1]$.
This numerical solution blows up shortly.
}
\label{Eg1D2}
\end{figure}
\subsection*{The Buckley-Leverett equation} The convection-diffusion Buckley-Leverett equation of the form 
\begin{equation}\label{bl}
\partial_t u +\partial_x f(u) =\varepsilon \partial_x (\nu(u) \partial_x u),
\end{equation}
is a model often used in reservoir simulations; see \cite{KT00}. Here $\varepsilon>0$ is a small parameter,  $f$ has an s-shape:
$$
f(u)=\frac{u^2}{u^2+(1-u)^2},
$$
and 
$$
\nu(u)=4 u(1-u)1_{0\leq u\leq 1}. 
$$
We numerically solve (\ref{bl}) with $\varepsilon=0.01$, subject to the following initial and boundary conditions 
\begin{equation}\label{bldata}
u(0, x)=(1-3x)1_{0\leq x\leq 1/3}, \quad u(t, 0)=1, \quad u(t, 1)=0.
\end{equation}
The exact solution is not available.  With numerical convergence we demonstrate that our numerical scheme is capable of simulating the sharp corner of the solution that is moving in time. From the results in Fig. \ref{Eg1D3uh} we observe the numerical convergence when the spacial mesh is refined. 
Moreover,  the lower bound  of $u_h(t,x)$ is well preserved around the corner point $x = 0.5$.
We note that the numerical solution here is comparable to that obtained in \cite{KT00} by the second order central scheme. 

\begin{figure}
\centering
\includegraphics[width=0.4\textwidth]{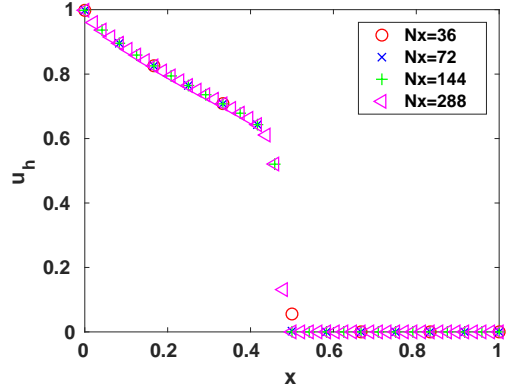}
\caption{The numerical solution of problem \eqref{bl} and \eqref{bldata} for $t = 0.2$ with 
$N_x = 36, 72, 144, 288$.}
\label{Eg1D3uh}
\end{figure}

\subsection{Two dimensional numerical tests}
We consider the convection-diffusion equation:
\begin{equation}\label{aniCD}
\partial_t u + \nabla\cdot f(u) = \nabla\cdot(A\nabla u) \text{ for } (x, y) \in [-1, 1]\times[-1,1] \text{ and } t>0,
\end{equation}
where
\begin{align*}
f(u) = u\vec v, \quad \vec v:=(0.01,0.01)^\top 
\end{align*}
and $A$ is a symmetric, positive definite matrix. 
For such an equation, exact solutions can be found of the form $u(t,x,y)=a(t)\exp(-\xi^\top B(t) \xi)$ with $\xi=\boldsymbol{x}-\vec vt$, $\boldsymbol{x}:= (x,y)^\top$, provided $a'=-a{\rm tr} (AB)$ and $B'=-2B^\top AB$. 
Here $a(0)$ can be chosen small enough to ensure that the periodic boundary condition adopted is reasonable and accurate at a finite time.  
In fact, if we set $\sigma_0 = 0.01$ and a $2\times2$ matrix $\sigma(t)$ be such that
\begin{equation*}
\sigma(t) = \sigma_0^2{\rm{Id}} + 2At \quad \text{ with } \rm{Id} \text{ being the identity matrix.}
\end{equation*}
Then the function
\begin{equation*}
u(t,x,y) = \frac{\sigma_0^2}{|\det(\sigma)|^\frac12}\exp\left(-\frac{(\boldsymbol{x}-\vec vt)^T\sigma^{-1}(\boldsymbol{x}-\vec vt)}{2}\right), \quad \boldsymbol{x}:= (x,y)^\top
\end{equation*}
 as given in \cite{CSG16}, is a solution to equation \eqref{aniCD}.

In our numerical tests,  we take three choices of the tensor $A$ as 
$$
A = 
\left(\begin{array}{cc}
1 & 0 \\ 0 & 1
\end{array}\right),
\left(\begin{array}{cc}
1 & 0 \\ 0 & 2
\end{array}\right)
\text{ or }
\left(\begin{array}{cc}
1 & 1 \\ 1 & 2
\end{array}\right),
$$
which are usually denoted as isotropic, diagonally anisotropic and fully anisotropic diffusion tensors.  

We begin to demonstrate the necessity of the MPS limiter using the first isotropic problem.
Table \ref{MPSlimiter} shows the minumum and maximum of the numerical solutions $u_h(t,x,y)$ using the MPS limiter over all the points in the test sets $S_{ij}$.
They are well bounded in the interval $[0,1]$, satisfying the maximum principle.
Table \ref{Nolimiter} shows the error $e_{\infty}^h(t)$ in \eqref{einf} without the MPS limiter.
One can observe that the numerical solutions $u_h(t,x,y)$ violates the maximum principle and the simulation will break down after some time.
Larger $\beta_0$ helps to suppress the overshoot or undershoot, but cannot realize the bound preservation ideally.
\begin{table}
\begin{minipage}[t]{0.45\textwidth}
\centering
\begin{tabular}{|c|c|c|}
\hline
$t$ & $\min(u)$& $\max(u)$ \\\hline
0 & 0 &1 \\\hline
$2\times10^{-6}$ & 0 & 0.96111\\\hline
$4\times10^{-6}$ & 0 & 0.92156\\\hline
$1\times10^{-5}$ & 0 & 0.81327\\\hline
$2\times10^{-5}$ & 0 & 0.68236\\\hline
\end{tabular}
\caption{MPS limiter}
\label{MPSlimiter}
\end{minipage}
\begin{minipage}[t]{0.45\textwidth}
\centering
\begin{tabular}{|c|c|c|}
\hline
$t$ & $\beta_0 = 2$& $\beta_0=4$ \\\hline
 0               & 1.705E-005 & 1.705E-005 \\\hline
$2\times10^{-6}$ & 7.023E-004 & 4.168E-004 \\\hline
$4\times10^{-6}$ & 1.339E-003 & 6.838E-004 \\\hline
$6\times10^{-6}$ & 1.874E-003 & 6.387E-004 \\\hline
$8\times10^{-6}$ & 2.403E-003 & 4.414E-004 \\\hline
\end{tabular}
\caption{No limiter: $e_{\infty}^h(t)$}
\label{Nolimiter}
\end{minipage}
\end{table}

In the first two cases, we take $\beta_0 = 2, \beta_1 = 0.16, \gamma = 0.1$ for both variables $x$ and $y$. 
For the fully anisotropic one, we take $\beta_0 = 4$. For smaller $\beta_0$,  small oscillations develop in time and the scheme can become unstable.
We observe a nice agreement of the numerical solution to the exact solution for all three cases of $A$. 

\begin{figure}
\centering
\begin{subfigure}[b]{0.45\textwidth}
\includegraphics[width=\textwidth]{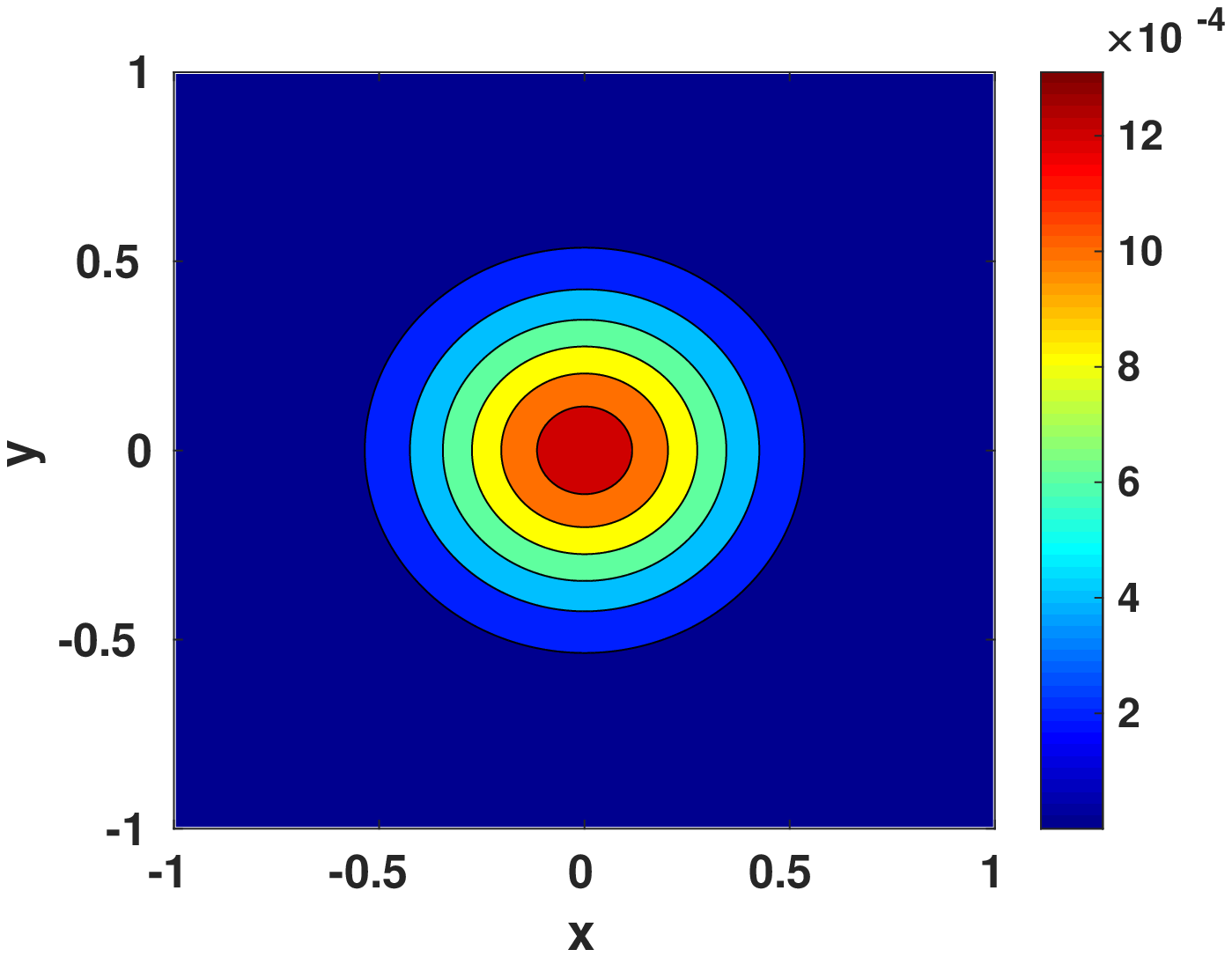}
\caption{$u(t,x)$.}
\label{Eg2DIso1ex}
\end{subfigure}
\begin{subfigure}[b]{0.45\textwidth}
\includegraphics[width=\textwidth]{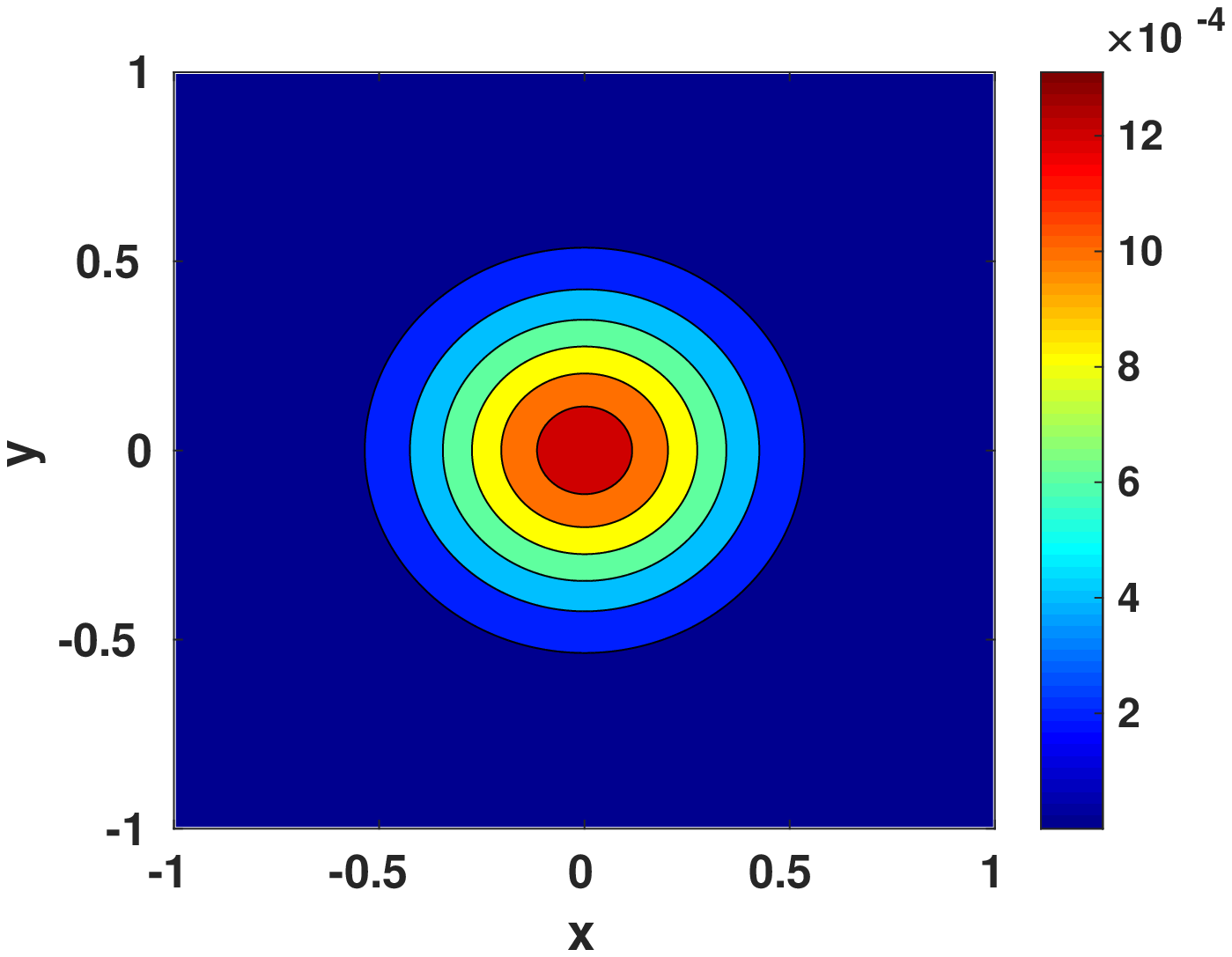}
\caption{$u_h(t,x)$.}
\label{Eg2DIso1}
\end{subfigure}
\caption{The contours of solutions to \eqref{aniCD} with the first choice of the matrix $A$ and $N_x = N_y = 200, \Delta t = 10^{-6}$.
$\mu_0 \approx 4.62\times10^{-3}$. 
The final time $t = 0.0381$.
Fig. \ref{Eg2DIso1ex} shows the exact solution.
Fig. \ref{Eg2DIso1} shows the numerical solution with the MPS limiter. 
}
\end{figure}

\begin{figure}
\centering
\begin{subfigure}[b]{0.45\textwidth}
\includegraphics[width=\textwidth]{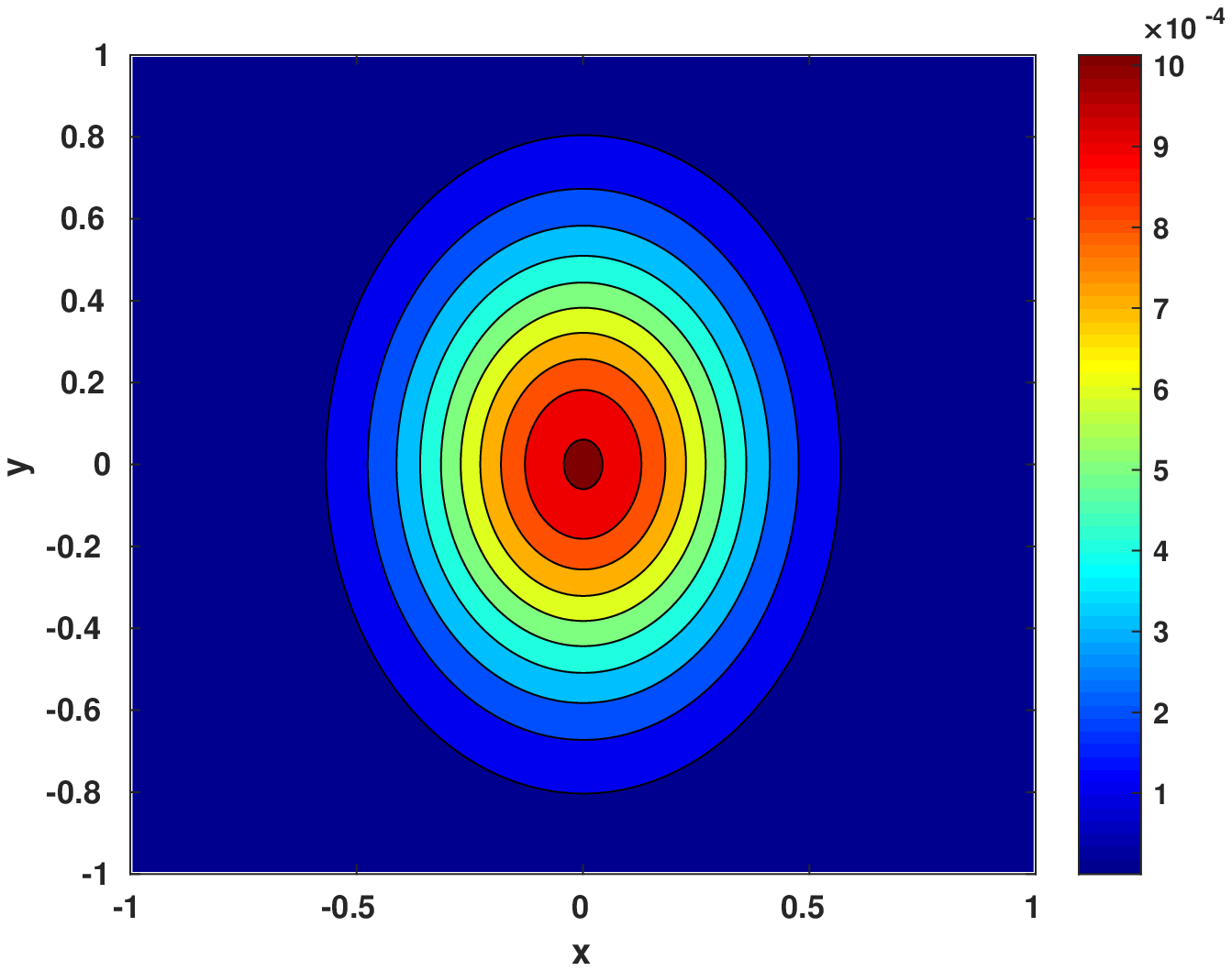}
\caption{$u(t,x, y)$.}
\label{Eg2DIso2ex}
\end{subfigure}
\begin{subfigure}[b]{0.45\textwidth}
\includegraphics[width=\textwidth]{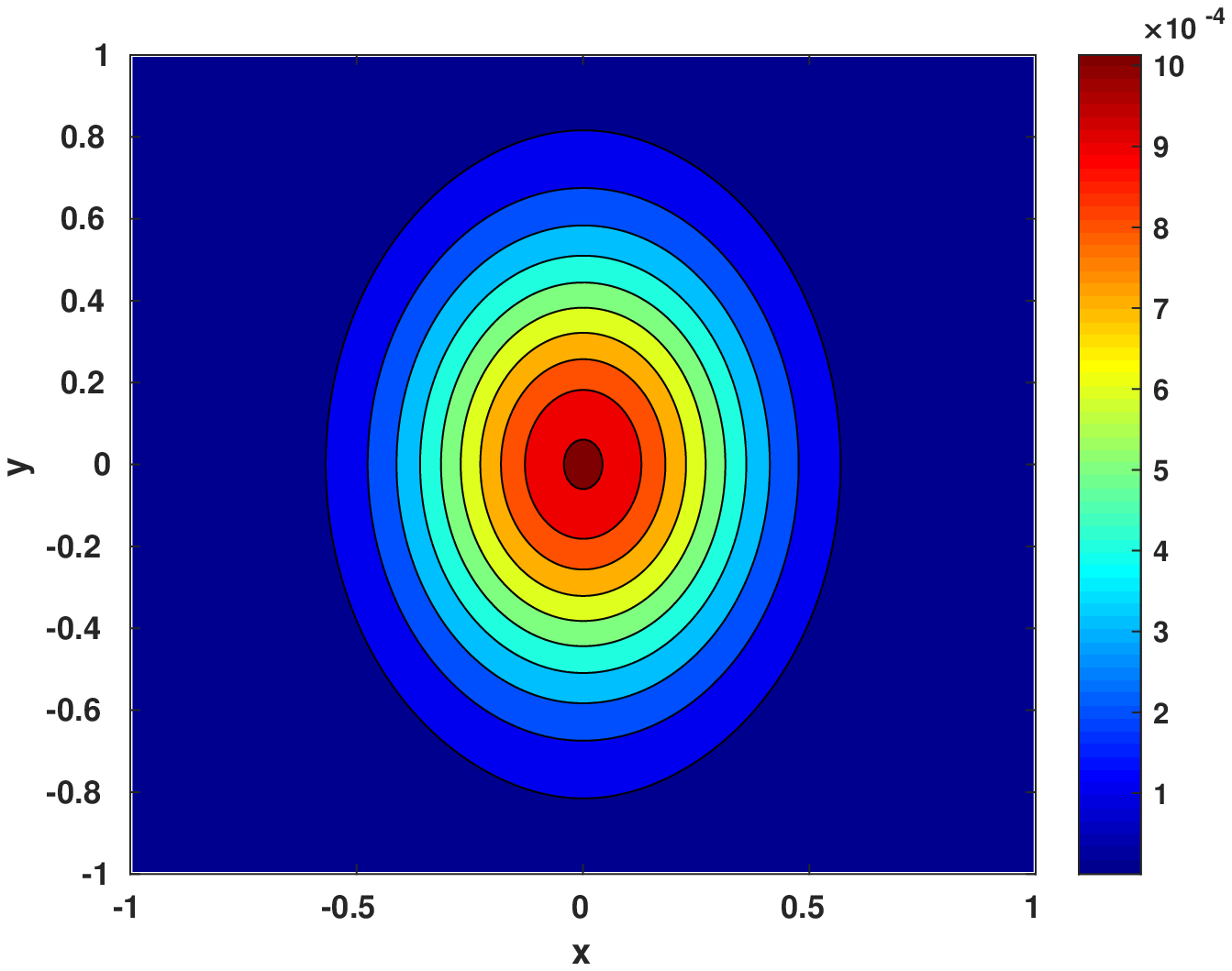}
\caption{$u_h(t,x, y)$.}
\label{Eg2DIso2}
\end{subfigure}
\caption{The contours of solutions to \eqref{aniCD} with the second choice of the matrix $A$ and $N_x = N_y = 200, \Delta t = 10^{-6}$. 
$\mu_0 \approx 4.62\times10^{-3}$.
The final time $t = 0.03485$.
Fig. \ref{Eg2DIso2ex} shows the exact solution.
Fig. \ref{Eg2DIso2} shows the numerical solution with the MPS limiter.
}
\end{figure}

For the equation \eqref{aniCD} with the third choice of the matrix $A$, we take care of the anisotropy by using a larger $\beta_0 = 4$, and adpative mesh size and time steps.
In the beginning, the solution is highly concentrated around the origin and requires a good resolution.
Therefore, for $t \in [0, 10^{-5}]$, we employ the discretization with $\Delta x = \Delta y = 0.005, \Delta t = 10^{-7}$.
Afterwards, a coarser mesh with $\Delta x = \Delta y = 0.01, \Delta t = 10^{-6}$ is used.
For time $t = 0.01$, Fig. \ref{Eg2DAni}(a-b) show the contours of the exact solution and the numerical solution with the MPS limiter.
Fig. \ref{Eg2DAni}(c) shows a slice of the exact solution (circles) and the numerical solution (crosses) for $y = 0$.
One can observe a good agreement of the two solutions.
\begin{figure}
\centering
\begin{subfigure}[b]{0.3\textwidth}
\includegraphics[width=\textwidth]{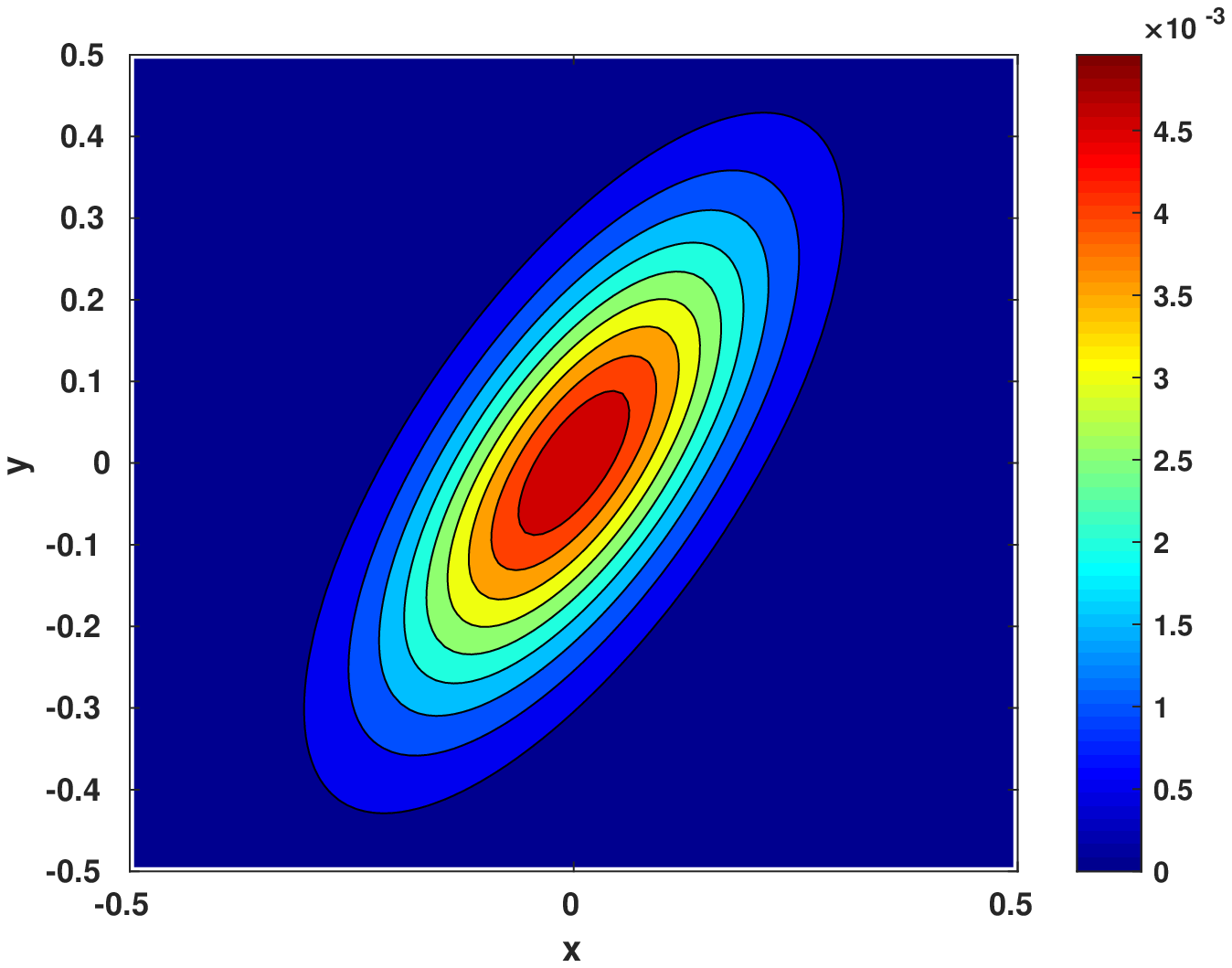}
\caption{$u(t,x,y)$.}
\label{Eg2DAniuex}
\end{subfigure}
\begin{subfigure}[b]{0.3\textwidth}
\includegraphics[width=\textwidth]{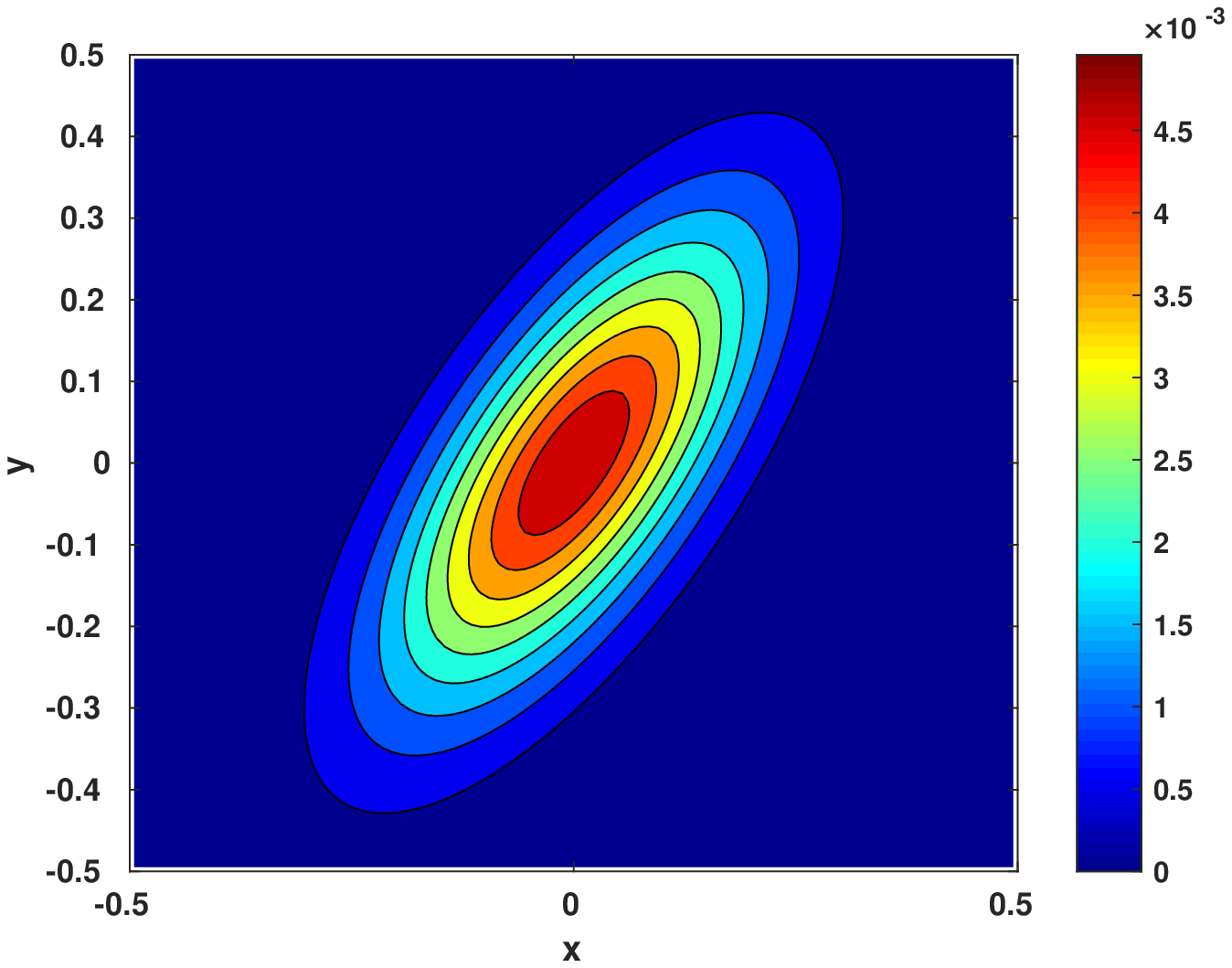}
\caption{$u_h(t,x,y)$.}
\label{Eg2DAniuh}
\end{subfigure}
\begin{subfigure}[b]{0.28\textwidth}
\includegraphics[width=\textwidth]{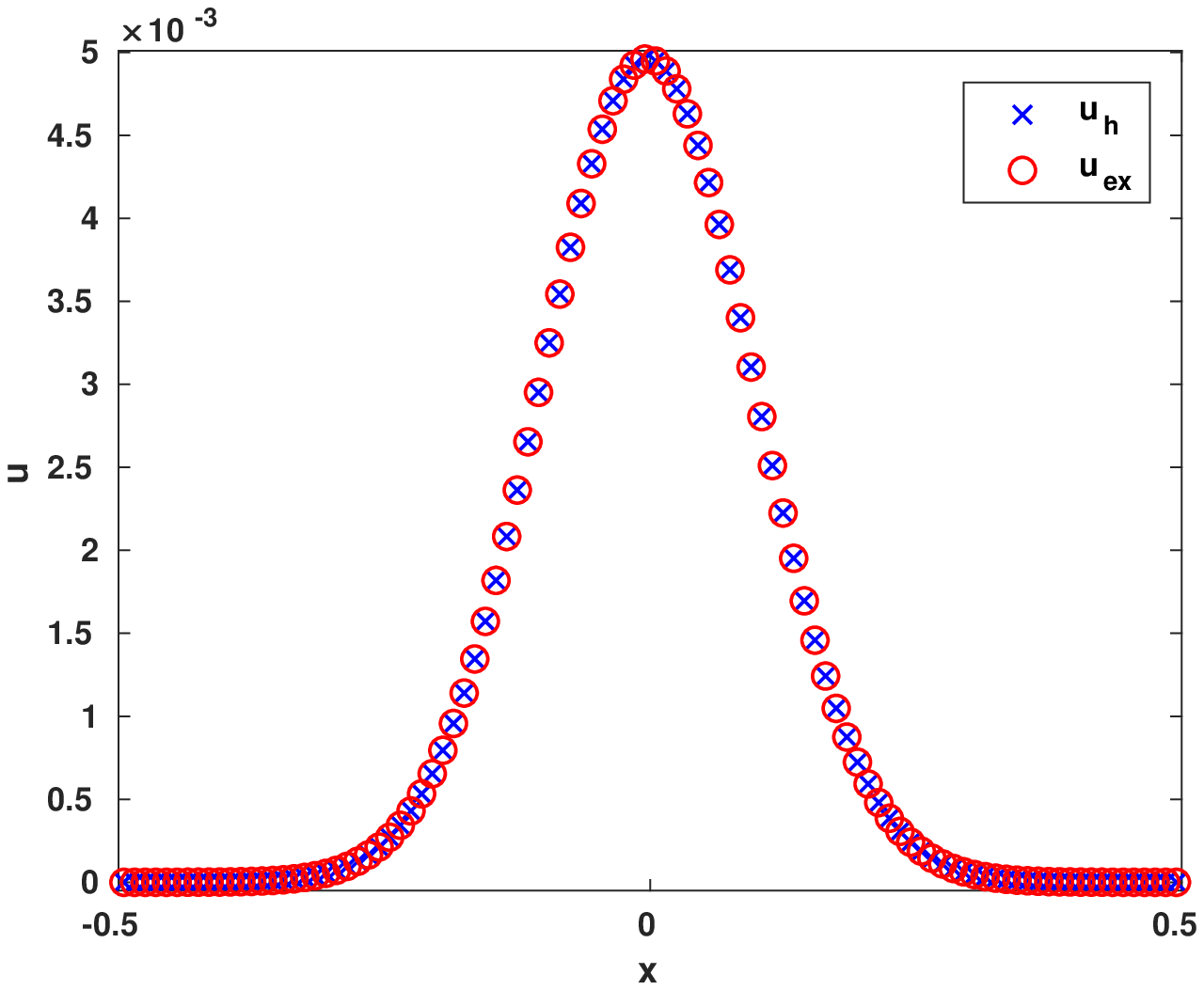}
\caption{$u_h(t,x,0)$.}
\label{Eg2DAniucut}
\end{subfigure}
\caption{
\eqref{aniCD} with the third choice of the matrix $A$ and adaptive mesh size and time steps.
$\mu_0 \approx 4.04\times10^{-3}$.
In the beginning, the solution is highly concentrated around the origin and requires a good resolution.
Therefore, for $t \in [0, 10^{-5}]$, we employ the discretization with $\Delta x = \Delta y = 0.005, \Delta t = 10^{-7}$.
Afterwards, a coarser mesh with $\Delta x = \Delta y = 0.01, \Delta t = 10^{-6}$ is used.
The final time $t = 0.01$.
Fig. \ref{Eg2DAniuex} shows the exact solution.
Fig. \ref{Eg2DAniuh} shows the numerical solution with the MPS limiter.
Fig. \ref{Eg2DAniucut} shows the good agreement between the exact solution (circles) and the numerical solution (crosses) for $y = 0$.
}
\label{Eg2DAni}
\end{figure}

\section{Concluding remarks}\label{secConclude}
In this paper, we present third order accurate DDG schemes which can be proven maximum-principle-satisfying for a class of diffusion equations with  variable diffusivity in terms of spatial variables and/or the unknown, in both one and two dimensional settings. 
Through careful theoretical analysis and numerical tests, we show that under suitable CFL conditions, with a simple scaling limiter involving little additional computational cost, the numerical schemes satisfy the strict maximum principle while maintaining uniform third order accuracy. 
The methodology extends to three dimensional rectangular meshes as well.
The effectiveness of the maximum-principle-satisfying DG schemes has been demonstrated through extensive numerical examples.

The presence of variable diffusivity has led to slightly more restrictive CFL conditions in order to preserve the maximum principle. 
In two dimensional case, the CFL condition seems a main factor for the slow numerical time evolution.  It would be interesting to extend the present result to implicit schemes so to improve the computational efficiency. 

\appendix 
\section{Proof of Theorem \ref{th2dk2}}
Upon regrouping, we decompose the right-hand side of (\ref{c5EF2DHQ}) as
\begin{align*}\label{EF2DHQ}
\langle u^{n+1}\rangle_{ij}
& =  \frac{\mu_x}{\mu}\sum_{\sigma=2}^{L-1} \hat\omega^{\sigma}H_1(\hat y_j^\sigma) + \frac{\mu_y}{\mu}\sum_{\sigma=2}^{L-1} \hat\omega^{\sigma}H_2(\hat x_i^\sigma) \\ \notag 
& \quad +\frac{\mu_x \hat \omega_1}{\mu} \big(H_1(y_{j-\frac12})+H_1(y_{j+\frac12}) \big) + \frac{\mu_y\hat \omega_1}{\mu} \big(H_2(x_{i-\frac12})+H_2(x_{i+\frac12}) \big) + B. \notag 
\end{align*}
Here $\hat\omega_1 = \hat\omega_L =\frac{1}{L(L-1)}$ is used. From \eqref{c5avedeck2} we see that 
$$
H_1(\hat y_j^\sigma)=R_{i}(M(\cdot,\hat y_j^\sigma), \mu, \Delta x, a), \quad H_2(\hat x_i^\sigma)=R_j(M(\hat x_i^\sigma, \cdot), \mu, \Delta y, b)
$$
for $1\leq \sigma \leq L$.
Notice that the terms in $B$, induced from the nontrivial $c$, involve only polynomials values at four vertices of $K_{ij}$, 
we proceed to regroup and combine them with $H_1(y_{j\pm\frac12})$ and $H_2(x_{i\pm\frac12})$, respectively, 
in the following  way: 
\begin{align*}
B_1 & = \frac{c\tau}{2\Delta x\Delta y}\big(u_h(x_{i-\frac12}^-, y_{j-\frac12}^+) + u_h(x_{i-\frac12}^+, y_{j-\frac12}^+) - u_h(x_{i+\frac12}^-, y_{j-\frac12}^+) - u_h(x_{i+\frac12}^+, y_{j-\frac12}^+)\big)  \; \text{with}\; H_1(y_{j-\frac12}),  \\
B_2 & = \frac{c\tau}{2\Delta x\Delta y}\big(u_h(x_{i+\frac12}^-, y_{j+\frac12}^-) - u_h(x_{i-\frac12}^+, y_{j+\frac12}^-) + u_h(x_{i+\frac12}^+, y_{j+\frac12}^-) - u_h(x_{i-\frac12}^-, y_{j+\frac12}^-)\big) \;  \text{with}\; H_1(y_{j+\frac12}), \\
B_3 & = \frac{c\tau}{2\Delta x\Delta y}\big(u_h(x_{i-\frac12}^+, y_{j-\frac12}^+) - u_h(x_{i-\frac12}^-, y_{j+\frac12}^-) + u_h(x_{i-\frac12}^+, y_{j-\frac12}^-) - u_h(x_{i-\frac12}^+, y_{j+\frac12}^+)\big) \;  \text{with}\;  H_2(x_{i-\frac12}), \\
B_4 & = \frac{c\tau}{2\Delta x\Delta y}\big(u_h(x_{i+\frac12}^-, y_{j-\frac12}^+) - u_h(x_{i+\frac12}^-, y_{j+\frac12}^-) + u_h(x_{i+\frac12}^-, y_{j-\frac12}^-) - u_h(x_{i+\frac12}^-, y_{j+\frac12}^+)\big) \;  \text{with}\;  H_2(x_{i+\frac12}). 
\end{align*}
We shall also use the following notations.  
\begin{align*}
\begin{array}{ll}
 \tilde\omega_{i}^{x,1}(\gamma^x,y) = \frac{\langle \gamma^x-\xi(1+\gamma^x) + \xi^2\rangle_i(y)}{2(1+\gamma^x)},
&\tilde\omega_{j}^{y,1}(\gamma^y,x) = \frac{\langle \gamma^y-\eta(1+\gamma^y) + \eta^2\rangle_j(x)}{2(1+\gamma^y)},\\
 \tilde\omega_{i}^{x,2}(\gamma^x,y) = \frac{\langle 1 - \xi^2\rangle_i(y)}{1-(\gamma^x)^2}, 
&\tilde\omega_{j}^{y,2}(\gamma^y,x) = \frac{\langle 1 - \eta^2\rangle_j(x)}{1-(\gamma^y)^2}, \\
 \tilde\omega_{i}^{x,3}(\gamma^x,y) = \frac{\langle -\gamma^x+\xi(1-\gamma^x) + \xi^2\rangle_i(y)}{2(1-\gamma^x)},
&\tilde\omega_{j}^{y,3}(\gamma^y,x) = \frac{\langle -\gamma^y+\eta(1-\gamma^y) + \eta^2\rangle_j(x)}{2(1-\gamma^y)}.
\end{array}
\end{align*}
For the first group, we have 
\begin{align*}
 &H_1(y_{j-\frac12}) + \frac{\mu}{\mu_x\hat\omega_1}B_1 = R_i(M(\cdot, y_{j-\frac12}), \mu, \Delta x, a) + \frac{\mu}{\mu_x\hat\omega_1}B_1 \notag\\
=& \left[\tilde{\omega}_{i}^{x,1}(\gamma^x, y_{j-\frac12}) - \mu a \big(\alpha_3(-\gamma^x) +\alpha_1(\gamma^x)\big) + \frac{c\tau\mu}{2\Delta x\Delta y\mu_x\hat\omega_1} \right]u_h(x_{i-\frac12}^+, y_{j-\frac12}^+) \nonumber\\
 &+\left[\tilde{\omega}_{i}^{x,2}(\gamma^x, y_{j-\frac12}) - \mu a \big(\alpha_2(-\gamma^x) +\alpha_2(\gamma^x)\big)\right]u_h(x_i^\gamma,y_{j-\frac12}^+)\nonumber\\
 &+\left[\tilde{\omega}_{i}^{x,3}(\gamma^x, y_{j-\frac12}) - \mu a \big(\alpha_1(-\gamma^x) +\alpha_3(\gamma^x)\big) - \frac{c\tau\mu}{2\Delta x\Delta y\mu_x\hat\omega_1}\right]u_h(x_{i+\frac12}^-, y_{j-\frac12}^+) \nonumber\\
 &+\left(\mu a\alpha_3(-\gamma^x) - \frac{c\tau\mu}{2\Delta x\Delta y\mu_x\hat\omega_1}\right)u_h(x_{i+\frac12}^+, y_{j-\frac12}^+)\nonumber \\
&   +\mu a\left[\alpha_2(-\gamma^x)u_h(x_{i+1}^\gamma,y_{j-\frac12}^+)+\alpha_1(-\gamma^x)u_h(x_{i+\frac32}^-, y_{j-\frac12}^+) \right]\nonumber \\
 &+\mu a\left[\alpha_1(\gamma^x) u_h(x_{i-\frac32}^+, y_{j-\frac12}^+)+\alpha_2(\gamma^x)u_h(x_{i-1}^\gamma,y_{j-\frac12}^+) \right]\nonumber \\ 
 & + \left(\mu a\alpha_3(\gamma^x) + \frac{c\tau\mu}{2\Delta x\Delta y\mu_x\hat\omega_1}\right)u_h(x_{i-\frac12}^-, y_{j-\frac12}^+),
\end{align*}
and the second group reduces to  
\begin{align*}
 &H_1(y_{j+\frac12}) + \frac{\mu}{\mu_x\hat\omega_1}B_2 = R_i(M(\cdot, y_{j+\frac12}), \mu, \Delta x, a) + \frac{\mu}{\mu_x\hat\omega_1}B_1 \notag\\
=& \left[\tilde{\omega}_{i}^{x,1}(\gamma^x, y_{j+\frac12}) - \mu a \big(\alpha_3(-\gamma^x) +\alpha_1(\gamma^x)\big) - \frac{c\tau\mu}{2\Delta x\Delta y\mu_x\hat\omega_1} \right]u_h(x_{i-\frac12}^+, y_{j+\frac12}^-) \nonumber\\
 &+\left[\tilde{\omega}_{i}^{x,2}(\gamma^x, y_{j+\frac12}) - \mu a \big(\alpha_2(-\gamma^x) +\alpha_2(\gamma^x)\big)\right]u_h(x_i^\gamma,y_{j+\frac12}^-)\nonumber\\
 &+\left[\tilde{\omega}_{i}^{x,3}(\gamma^x, y_{j+\frac12}) - \mu a \big(\alpha_1(-\gamma^x) +\alpha_3(\gamma^x)\big) + \frac{c\tau\mu}{2\Delta x\Delta y\mu_x\hat\omega_1}\right]u_h(x_{i+\frac12}^-, y_{j+\frac12}^-) \nonumber\\
 &+\left(\mu a\alpha_3(-\gamma^x) + \frac{c\tau\mu}{2\Delta x\Delta y\mu_x\hat\omega_1}\right)u_h(x_{i+\frac12}^+, y_{j+\frac12}^-) \nonumber \\
  & +\mu a\left[\alpha_2(-\gamma^x)u_h(x_{i+1}^\gamma,y_{j+\frac12}^-)+\alpha_1(-\gamma^x)u_h(x_{i+\frac32}^-, y_{j+\frac12}^-) \right]\nonumber \\
 &+\mu a\left[\alpha_1(\gamma^x) u_h(x_{i-\frac32}^+, y_{j+\frac12}^-)+\alpha_2(\gamma^x)u_h(x_{i-1}^\gamma,y_{j+\frac12}^-) \right]\nonumber \\
 & +
   \left(\mu a\alpha_3(\gamma^x) - \frac{c\tau\mu}{2\Delta x\Delta y\mu_x\hat\omega_1}\right)u_h(x_{i-\frac12}^-, y_{j-\frac12}^+). \nonumber
\end{align*}
From the above two groups we see that all coefficients of solution values involved  are nonnegative if 
\begin{subequations}\label{cfl1}
\begin{align}
& a\alpha_3(\pm\gamma^x) - \frac{\Delta x|c|}{2\Delta y\hat\omega_1} \geq 0, \\
& \mu \leq \min\left\{\frac{\tilde\omega_{i}^{x,1}(\pm\gamma^x, y_{j\pm\frac12})}{a\big(\alpha_1(\pm\gamma^x) + \alpha_3(\mp\gamma^x)\big) + \frac{\Delta x |c|}{\Delta y\hat\omega_1}},
\frac{\tilde{\omega}_{i}^{x,2}(\gamma^x, y_{j\pm \frac12})}{2\alpha_2(\gamma^x)}\right\}.
\end{align}
\end{subequations}
Here we used $\tilde{\omega}_{i}^{x,3}(\gamma^x, y) = \tilde{\omega}_{i}^{x,1}(-\gamma^x, y)$ and $\alpha_2(\gamma^x) = \alpha_2(-\gamma^x)$.

Similarly,  all coefficients of solution values in 
\begin{align*}
H_2(x_{i-\frac12}) + \frac{\mu}{\mu_y\hat\omega_1}B_3 = R_j(M(x_{i-\frac12}, \cdot), \mu, \Delta y, b) + \frac{\mu}{\mu_y\hat\omega_1}B_3 
\end{align*}
and 
\begin{align*}
H_2(x_{i+\frac12}) + \frac{\mu}{\mu_y\hat\omega_1}B_4 = R_j(M(x_{i+\frac12}, \cdot), \mu, \Delta y, b) + \frac{\mu}{\mu_y\hat\omega_1}B_4
\end{align*}
are also nonnegative if
\begin{subequations}\label{cfl2}
\begin{align}
& b\alpha_3(\pm\gamma^y) - \frac{\Delta y|c|}{2\Delta x\hat\omega_1} \geq 0,\\
& \mu \leq \min\left\{\frac{\tilde\omega_{j}^{y,1}(\pm\gamma^y, x_{i\pm\frac12})}{b\big(\alpha_1(\pm\gamma^y) + \alpha_3(\mp\gamma^y)\big) + \frac{\Delta y |c|}{\Delta x\hat\omega_1}},
\frac{\tilde\omega_{j}^{y,2}(\gamma^y, x_{i\pm\frac12})}{2\alpha_2(\gamma^y)}\right\}.
\end{align}
\end{subequations}
Here we used $\tilde{\omega}_{j}^{y,3}(\gamma^y, x) = \tilde{\omega}_{j}^{y,1}(-\gamma^y,x)$ and $\alpha_2(\gamma^y) = \alpha_2(-\gamma^y)$.

Since $\tilde\omega_i^{x,\sigma}, \tilde\omega_j^{y,\sigma}$ for $\sigma = 1, 2, 3$ only depend on $M(x,y)|_{K_{ij}}$ and $\gamma$, therefore bounded  from below. We set such bound as 
\begin{equation}\label{min_omega}
\underline\omega_{ij} = \min_{\gamma^x, \gamma^y, x\in \hat S_{i}^x, y\in \hat S_{j}^y}\{\tilde\omega_{i}^{x,1}(\pm\gamma^x, y), 
\tilde\omega_{i}^{x,2}(\gamma^x,y), 
\tilde\omega_{j}^{y,1}(\pm\gamma^y, x), 
\tilde\omega_{j}^{y,2}(\gamma^y, x)\}. 
\end{equation}
Notice also that for $\gamma = \max\{|\gamma^x|, |\gamma^y|\}$,  using $\beta_1\leq 1/4$, we have 
\[
\alpha_1(\pm\gamma^x) + \alpha_3(\mp\gamma^x) = \beta_0 + \frac{8\beta_1-2}{1\pm\gamma^x} \leq \beta_0 + \frac{8\beta_1-2}{1-\gamma},
\]
which also hods when $\gamma^x$ is replaced by $\gamma^y$.  Hence both (\ref{cfl1}b) and  (\ref{cfl2}b) are ensured  by  \eqref{CFL2d-}.

Observe that (\ref{cfl1}a) and  (\ref{cfl2}a) are implied by 
$$
\beta_0 +\min_{s\in [-\gamma, \gamma]}\frac{8\beta_1-3+s}{2(1-s)} \geq \frac{|c|\kappa}{2\hat \omega_1 \min\{a, b\}}.
$$
Using $\beta_1 \leq 1/4$ we see that  the minimum on the left hand side is $-1$, obtained at $s=\gamma$ when $\gamma = 8\beta_1-1$, hence this relation gives the lower bound in (\ref{beta2dp}). 

\section{Proof of Theorem \ref{th2dk2+} } 
For simplicity of presentation, in the following we consider  $A=A(x, y)$ instead of $A(x, y, u)$, for which we only need to use $\{A_h\}$ on interfaces, so that $c=c(x, y)$, $a=a(x, y)$ and $b=b(x, y)$. 
 
Since $u_h(x,y)$ is quadratic in terms of $x$ and $y$ respectively,  we can use the formula (\ref{pj}) to obtain 
\begin{align*}
\dot p(\eta)=&\dot\omega^1(\eta)p(-1) + \dot\omega^2(\eta)(\gamma) + \dot\omega^3(\eta)p(1) \\
=& \frac{2\eta -1-\gamma}{2(1+\gamma)}{p}(-1)+\frac{2\eta }{(\gamma^2 -1)}{p}(\gamma)+\frac{2\eta +1-\gamma}{2(1-\gamma)}{p}(1).
\end{align*}
Here $\eta$ is the variable in the reference element $[-1,1]$, 
and $\dot\omega^\sigma(\eta) = \frac{d}{d\eta}\omega^\sigma(\eta)$.
Then 
\begin{align*}
\int_{J_j} c(x,y)\{\partial_y  u_h(x, y)\}dy &= \{u_h(x, y_{j-\frac12}^+)\} \int_{-1}^1 c\left(x,y_j+\frac{\Delta y}{2}\eta \right)\dot\omega^1(\eta)\,d\eta  \\
 &\quad +\{u_h(x, y^\gamma_{j})\} \int_{-1}^1 c\left(x, y_j+\frac{\Delta y}{2}\eta\right)\dot\omega^2(\eta)\,d\eta \\
 &\quad +\{u_h(x, y_{j+\frac12}^-)\} \int_{-1}^1 c\left(x, y_j+\frac{\Delta y}{2}\eta\right)\dot\omega^3(\eta)\,d\eta.
\end{align*}
Here the average $\{\cdot\}$ is taken with respect to $x^-$ and $x^+$.
Using the quadrature rule on the right-hand side of (\ref{EF2Dv1+}), we obtain the following scheme
\begin{equation}\label{c5EF2DHQ+}
\bar u^{n+1}_{ij}= \frac{\mu_x}{\mu}\sum_{\sigma=1}^L \hat\omega^{\sigma}H_1(\hat y_j^\sigma) + \frac{\mu_y}{\mu}\sum_{\sigma=1}^L \hat\omega^{\sigma}H_2(\hat x_i^\sigma)+B. 
\end{equation}
The test set now reduces to 
\begin{align*}
& S^x_i = \{x_{i-\frac12}, x_i^{\gamma}, x_{i+\frac12}\} = x_i + \frac{\Delta x}{2}\{-1, \gamma, 1\},  \\
& S^y_j = \{y_{j-\frac12}, y_j^{\gamma}, y_{j+\frac12}\} = y_j + \frac{\Delta y}{2}\{-1, \gamma, 1\}
\end{align*}
with $\gamma$ satisfying
\begin{align} \label{c5angle2D+}
&|\gamma| \leq \frac13  \; \text{and} \; |\gamma|\leq 8\beta_1-1.
\end{align}
It can be shown that there exists $L \geq \frac{2k+3}{2}$ and $\gamma$ satisfying  (\ref{c5angle2D+})  such that $x_i^\gamma \in \hat S^x_i$ and $y_j^\gamma\in \hat S^y_j$, 
the corresponding quadrature weight is denoted by $\hat \omega^*$.
Upon regrouping,  the right-hand side of (\ref{c5EF2DHQ+}) may be decomposed  as 
\begin{align*}
\bar u^{n+1}_{ij}
 = &\quad \frac{\mu_x}{\mu}\sum_{\sigma\neq 1, *, L} \hat\omega^{\sigma}H_1(\hat y_j^\sigma) + \frac{\mu_y}{\mu}\sum_{\sigma\neq1,*,L} \hat\omega^{\sigma}H_2(\hat x_i^\sigma) \\
& + \frac{\mu_x}{\mu} \big(\hat \omega^1 H_1(y_{j-\frac12})+ \hat \omega^* H_1(y_j^\gamma)+ \hat \omega^1 H_1(y_{j+\frac12}) \big) \\
& + \frac{\mu_y}{\mu} \big(\hat \omega^1 H_2(x_{i-\frac12})+ \hat \omega^* H_2(x_i^\gamma)+ \hat \omega^1 H_2(x_{i+\frac12}) \big) + B. 
\end{align*}
Here $\hat\omega^1 = \hat\omega^L$ is used. 
The terms in $B$ will be regrouped correspondingly.  More precisely, the first term involving integration on $J_j$ is regrouped in terms involving solution values at $y_{j-\frac12}^+, y_j^\gamma$ and $y_{j+\frac12}^-$,  and combined with $H_1(y_{j-\frac12}), H_1(y_j^\gamma)$ and $H_1(y_{j+\frac12})$, respectively.
We check upon the first group only: 
\begin{align*}
 &H_1(y_{j-\frac12}) + \frac{\mu}{\mu_x\hat\omega^1}B_1 = R_i(M(\cdot, y_{j-\frac12}), \mu, \Delta x, a) + \frac{\mu}{\mu_x\hat\omega^1}B_1 \\
=&\left[\omega^1 - \mu a \big(\alpha_3(-\gamma) +\alpha_1(\gamma)\big) - \frac{\mu\Delta x}{2\Delta y\hat\omega^1} 
   \int_{-1}^1c\left(x_{i-\frac12}, y_j+\frac{\Delta y}{2}\eta\right)\dot\omega^1(\eta)\,d\eta \right]u_h(x_{i-\frac12}^+, y_{j-\frac12}^+) \\
 &+\left[\omega^2 - \mu a \big(\alpha_2(-\gamma) +\alpha_2(\gamma)\big)\right]u_h(x_i^\gamma,y_{j-\frac12}^+)\\
 &+\left[\omega^3 - \mu a \big(\alpha_1(-\gamma) +\alpha_3(\gamma)\big) +\frac{\mu\Delta x}{2\Delta y\hat\omega^1}
   \int_{-1}^1c\left(x_{i+\frac12}, y_j+\frac{\Delta y}{2}\eta\right)\dot\omega^1(\eta)\,d\eta \right]u_h(x_{i+\frac12}^-, y_{j-\frac12}^+) \\
 &+\left(\mu a\alpha_3(-\gamma) +\frac{\mu\Delta x}{2\Delta y\hat\omega^1}\int_{-1}^1c\left(x_{i+\frac12}, y_j+\frac{\Delta y}{2}\eta\right)\dot\omega^1(\eta)\,d\eta
   \right)u_h(x_{i+\frac12}^+, y_{j-\frac12}^+) \\
 &+\mu a\left[\alpha_2(-\gamma)u_h(x_{i+1}^\gamma, y_{j-\frac12}^+)+\alpha_1(-\gamma)u_h(x_{i+\frac32}^-,y_{j-\frac12}^+) \right] \\
 &+\mu a\left[\alpha_1(\gamma) u_h(x_{i-\frac32}^+, y_{j-\frac12}^+)+\alpha_2(\gamma)u_h(x_{i-1}^\gamma, y_{j-\frac12}^+) \right] \\ 
 &+\left(\mu a\alpha_3(\gamma) -\frac{\mu\Delta x}{2\Delta y\hat\omega^1}\int_{-1}^1c\left(x_{i-\frac12}, y_j+\frac{\Delta y}{2}\eta\right)\dot\omega^1(\eta)\,d\eta \right)
   u_h(x_{i-\frac12}^-, y_{j-\frac12}^+),
\end{align*}
From the three groups of $H_1(y_{j-\frac12}), H_1(y_j^\gamma)$ and $H_1(y_{j+\frac12})$, we see that all the coefficients of solutions values are nonnegative if 
\begin{align*}
& a\alpha_3(\pm\gamma) - \frac{\Delta x}{2\Delta y \min\{\hat\omega_1, \hat \omega^*\}}
\max_{x \in S_i^x; \sigma}
\left|g_j^\sigma(x) \right| \geq 0,\\ 
& \mu \leq \min\left\{\frac{\min\{\omega^1,\omega^3\}}{a\big(\alpha_1(\pm\gamma) + \alpha_3(\mp\gamma)\big) + \frac{\Delta x}{2\Delta y \min\{\hat\omega_1, \hat \omega^*\} }\max_{x \in S_i^x; \sigma} \left|g_j^\sigma(x) \right|},
\frac{\omega^2}{2a\alpha_2(\gamma)}\right\},
\end{align*}
where $g_j^\sigma(x)=\int_{-1}^1 c\left(x, y_j+\frac{\Delta y}{2}\eta\right)\dot\omega^\sigma(\eta)\,d\eta$.  The rest of terms in $B$ when combined with $H_2(y)$ for $y\in S_j^y$ led to the following conditions 
\begin{align*}
& b \alpha_3(\pm\gamma) - \frac{\Delta y}{2\Delta x \min\{\hat\omega_1, \hat \omega^*\}}
\max_{y \in S_j^y; \sigma}
\left|\int_{-1}^1 c\left(x_i+\frac{\Delta x}{2}\xi, y\right)\dot\omega^\sigma(\xi)\,d\xi \right| \geq 0,\\ 
& \mu \leq \min\left\{\frac{\min\{\omega^1,\omega^3\}}{b\big(\alpha_1(\pm\gamma) + \alpha_3(\mp\gamma)\big) + \frac{\Delta y}{2\Delta x \min\{\hat\omega_1, \hat \omega^*\} }\max_{y \in S_j^y; \sigma} \left|g_i^\sigma(y)\right|},
\frac{\omega^2}{2a\alpha_2(\gamma)}\right\},
\end{align*}
where $g_i^\sigma(y)=\int_{-1}^1 c\left(x_i+\frac{\Delta x}{2}\xi, y)\right)\dot\omega^\sigma(\xi)\,d\xi$. 
Note that both $\left|\int_{-1}^1c\left(x, y_j+\frac{\Delta y}{2}\eta\right)\dot\omega^\sigma(\eta)\,d\eta\right|$ and $\left|\int_{-1}^1c\left(x_i+\frac{\Delta x}{2}\xi, y)\right)\dot\omega^\sigma(\xi)\,d\xi\right|$ are bounded from above by 
\begin{align*}
\|c\|_\infty \left|\int_{-1}^1\dot\omega^\sigma(\eta) d\eta \right| \leq \frac{4\|c\|_\infty}{1-\gamma}.
\end{align*}
Using the fact that $\kappa^{-1} \leq \frac{\Delta x}{\Delta y}\leq \kappa$, $\omega^3 \leq \omega^1$, $\frac{1}{L(L-1)}=\hat \omega_1 < \hat \omega^*$, 
and $\alpha_3(\pm\gamma) \geq \beta_0 - 1$, as well as 
\[
\alpha_1(\pm\gamma) + \alpha_3(\mp\gamma) \leq \beta_0 + \frac{8\beta_1 - 2}{1+\gamma}.
\]
We see that all the above needed constraints are implied by the more severe constraints, including  \eqref{beta2d} for $\beta_0$, 
and the CFL condition \eqref{CFL2d}.

\bigskip

\section*{Acknowledgments}  Liu's research was partially supported by the National Science Foundation under Grant DMS1812666 and by NSF Grant RNMS (Ki-Net) 1107291. 


\end{document}